\documentclass[11pt,twocolumn]{article}
\usepackage[numbers,sort&compress]{natbib}
\usepackage{jabbrv}
\usepackage{multibib}
\newcites{S}{Supporting References} % Creates the \citeS{} command

\usepackage[
  letterpaper,
  top=1in,
  bottom=1in,
  left=0.65in,
  right=0.65in,
  columnsep=0.25in
]{geometry}
\usepackage[T1]{fontenc}
\usepackage[utf8]{inputenc}
\usepackage{lmodern}
\usepackage{microtype}

\usepackage{bm}
\usepackage{amsmath,amssymb,amsthm}
\usepackage{graphicx}
\usepackage{booktabs}
\usepackage{float}
\usepackage{placeins}
\usepackage{authblk}
\usepackage[colorlinks=true,allcolors=blue]{hyperref}

\usepackage{fancyhdr}

\fancypagestyle{firstpage}{
  \fancyhf{}
  \fancyfoot[C]{\footnotesize E-mail: \href{mailto:m.brolly@ed.ac.uk}{m.brolly@ed.ac.uk}}

}

\theoremstyle{plain}
\newtheorem{proposition}{Proposition}

\usepackage{libertine}
\usepackage[libertine]{newtxmath}

\usepackage{bm}
\usepackage{amsthm}
\usepackage{placeins} % Required for \FloatBarrier

\newcommand{\indep}{\mathrel{\perp\!\!\!\perp}}

\theoremstyle{plain}
\newtheorem{suppproposition}{Supplementary Proposition} 
\newtheorem{definition}{Definition}
\title{Stochasticity and probabilistic trajectory scoring are essential for data-driven closures of chaotic systems}

\author{Martin T. Brolly}
\affil{School of Mathematics and Maxwell Institute for Mathematical Sciences, University of Edinburgh, King's Buildings, Edinburgh EH9 3FD, UK}

\date{\today}

\begin{document}

\twocolumn[
\begin{@twocolumnfalse}

\maketitle

\begin{center}
\begin{minipage}{0.9\textwidth}
\begin{center}
\textbf{Abstract}
\end{center}
\small
Coarse-grained models of chaotic systems neglect unresolved degrees of freedom, inducing structured model error that limits predictability and distorts long-term statistics.
Typical data-driven closures are trained to minimize error over a single time step, implicitly assuming Markovian dynamics and often failing to capture long-term behavior. Recent approaches instead optimize losses over finite trajectories.
However, when such trajectory-based training is carried out with deterministic pointwise losses, it introduces a fundamental mathematical degeneracy.
We prove that optimizing pointwise deterministic losses such as mean squared error over chaotic trajectories suppresses predictive variance, with corresponding loss of physical variability in long integrations.
In contrast, strictly proper scoring rules avoid this degeneracy.
By targeting forecast distributions rather than realized trajectories, they remove the penalty against predictive spread and align the long-lead optimum with the invariant measure.
Using quasi-geostrophic turbulence as a canonical chaotic system, we validate this theory: one-step-trained closures fail to capture stable coarse-grained dynamics, while deterministic closures optimized over trajectories exhibit the variance-loss tendency predicted by our analysis.
Stochastic closures calibrated over trajectories using the energy score, however, overcome both structural limitations, yielding skillful ensemble forecasts and realistic long-term statistics. Our results establish that both stochastic modeling and trajectory-based calibration are essential for faithfully representing the dynamics of coarse-grained systems.
\end{minipage}
\end{center}

\vspace{1em}

\end{@twocolumnfalse}
]

% \noindent\textbf{Keywords:} forecasting; coarse-graining; reduced-order modeling; turbulence; probabilistic

% \section*{Significance statement}
% Simulating complex systems—whether in climate science, biology, or engineering—often requires ignoring certain components to make computations feasible. However, this simplification introduces errors that accumulate over time, causing models to drift away from reality. While conventional approaches focus on applying deterministic corrections to minimize immediate errors, we show that it is necessary to model these errors probabilistically and optimize predictions over longer trajectories. We demonstrate that randomness is not just useful for quantifying uncertainty, but is structurally required for calibrated forecasts and realistic long-term behavior. This work establishes a general framework for learning stable, statistically accurate reduced-order models in any setting where the full state of a chaotic system cannot be resolved.

% \section*{Competing interests}
% The author declares no competing interests.

% ----------------------------------------------------------------
% Paste the main body of the paper here.
% Remove any occurrences of \Parasplit.
% Replace any PNAS-only section wrappers as described below.
% ----------------------------------------------------------------

% Example:
% \section{Introduction}
% ...
% \section{The Closure Problem}
% ...
% \section{Data-Driven Closures}
% ...
% \section{Numerical Results}
% ...
% \section{Conclusions}
% ...

Many complex systems in science and engineering involve more dynamically relevant degrees of freedom than can be feasibly resolved. Practical models therefore evolve only a subset of variables while treating the remainder implicitly. As a result, they are necessarily \textit{coarse-grained}: they retain a reduced set of variables while neglecting unresolved components.

This simplification introduces model error with distinctive structure. Unlike measurement noise, the error induced by coarse graining is dynamical: it arises from deterministic interactions between resolved and unresolved variables. In chaotic systems, where trajectories diverge exponentially and invariant measures govern long-term statistics, even small systematic misrepresentations of these interactions can accumulate, leading to biased forecasts, incorrect variability, and distorted stationary distributions. The resulting discrepancies are not confined to short-term predictive skill; they fundamentally alter the statistical behavior of the model.

The problem of representing unresolved dynamics, commonly referred to as closure or parameterization, has a long history, particularly in fluid dynamics~\cite{pope2000} and climate science~\cite{christensen2022, berner2017}. Classical closures posit deterministic relationships between the resolved state and the effect of neglected components. More recently, data-driven methods have replaced hand-crafted parameterizations with flexible function approximators trained on high-resolution simulations or observations. These approaches have achieved impressive reductions in one-step prediction error and have demonstrated that machine learning models can flexibly represent how the influence of unresolved dynamics depends on the resolved state.

Most existing data-driven closures are trained in an offline manner: parameters are optimized to minimize discrepancies between predicted and observed tendencies at a single time step. This procedure implicitly treats the coarse-grained dynamics as Markovian. In complex chaotic systems, this assumption is generally invalid due to the absence of a clear timescale separation between the resolved and unresolved degrees of freedom. For example, in fluid turbulence
it has long been recognized that coarse graining introduces non-Markovian memory~\cite{kraichnan1959}. While the strength of this non-Markovianity depends on the particular system, it is known to be significant in geophysical turbulence, where the coherence of baroclinic eddies ensures a strong, history-dependent coupling between the resolved and unresolved scales~\cite{dijkstra2022eddy,vanderborght2024physics}.
More broadly, in any partially resolved system, the effective dynamics of the resolved variables inevitably inherit both memory and stochasticity from the unresolved components~\cite{zwanzig2001nonequilibrium, chorin2000optimal}. Offline training systematically fails to capture this memory. The accumulation of small one-step biases can severely degrade trajectory statistics, often leading to unphysical bias and/or numerical instability.

To correct these temporal errors and implicitly account for this missing memory, models are increasingly calibrated over multi-step trajectories (online learning). In this work, we demonstrate that attempting to resolve these issues deterministically introduces a fundamental mathematical degeneracy. We prove that when the resolved dynamics are intrinsically stochastic due to coarse graining, calibration over trajectories using deterministic distance-based metrics forces a collapse of the system's natural variance. Calibration must instead be framed probabilistically; we show that optimizing strictly proper scoring rules avoids this degeneracy, ensuring that the theoretical optimum is consistent with the system's true conditional law. Crucially, this improvement stems from aligning the learning objective with the stochastic nature of coarse-grained dynamics.

A related phenomenon has been observed in recent data-driven weather forecasting. Deterministic models such as GraphCast~\cite{lam2023} and FourCastNet~\cite{pathak2022fourcastnet} tend to lose fine-scale variability at long lead times. This has motivated growing interest in probabilistic and generative forecasting systems, including GenCast~\cite{price2023gencast} and hybrid models such as NeuralGCM~\cite{kochkov2024}. Our results explain why that shift reflects the structure of the problem, rather than merely an ad hoc engineering choice.

We validate these theoretical insights using quasi-geostrophic turbulence as a canonical chaotic system. We show that offline-trained closures become severely miscalibrated over multi-step trajectories, often leading to numerical instability, while online-trained deterministic closures suffer the variance collapse predicted by our analysis. In contrast, online-trained probabilistic closures overcome both limitations, yielding skillful ensemble forecasts and realistic stationary spectra. These findings establish that stochasticity and trajectory-based optimization are not independent optional refinements, but coupled, essential mathematical requirements for representing unresolved dynamics in chaotic systems, with implications extending far beyond turbulence modeling.

\section{The Closure Problem}
Consider a dynamical system
\begin{equation}\label{eq:complete-system}
    \bm{x}_{n+1}=\bm{\Phi}(\bm{x}_n).
\end{equation}
Let $(\overline{\,\cdot\,})$ denote a coarse-graining operator, mapping $\bm{x}$ to a lower-dimensional representation $\overline{\bm{x}}$, and let $\bm{x}'$ denote the degrees of freedom neglected by $\overline{\bm{x}}$. In particular, assume there is a function $\bm{g}$ such that $\bm{x}$ can be reconstructed from $\overline{\bm{x}}$ and $\bm{x}'$ via $\bm{x}=\bm{g}(\overline{\bm{x}},\,\bm{x}')$. Suppose we want to construct a model for $\overline{\bm{x}}$ without solving for $\bm{x}'$. Then we will refer to $\overline{\bm{x}}$ as the resolved component and $\bm{x}'$ as the unresolved component.

The dynamics of $\overline{\bm{x}}$ and $\bm{x}'$ are in general coupled, so that we have, for some $\bm{\Phi}_1$ and $\bm{\Phi}_2$,
\begin{subequations}
    \begin{align}\label{eq:coarse-grained}
        \overline{\bm{x}}_{n+1} &= \bm{\Phi}_1(\overline{\bm{x}}_n,\,\bm{x}'_n),\\
        \bm{x}'_{n+1} &= \bm{\Phi}_2(\overline{\bm{x}}_n,\,\bm{x}'_n).
    \end{align}
\end{subequations}
Notice $\bm{\Phi}_1(\overline{\bm{x}}_n,\, \bm{x}'_n) = \overline{\bm{\Phi}(\bm{x}_n)}$.

Now suppose we have a simplified model for the evolution of $\overline{\bm{x}}$ in the form of a map $\overline{\bm{x}}\mapsto\overline{\bm{\Phi}}(\overline{\bm{x}})$. Often a choice of $\overline{\bm{\Phi}}$ arises from simply neglecting any terms in $\bm{\Phi}$ which depend on $\bm{x}'$. However, we make no assumptions here about $\overline{\bm{\Phi}}$ or its accuracy. For any $\overline{\bm{\Phi}}$ we can define the model error
\begin{equation}
    \bm{m}(\overline{\bm{x}}_n,\,\bm{x}'_n) = \bm{\Phi}_1(\overline{\bm{x}}_n,\, \bm{x}'_n) - \overline{\bm{\Phi}}(\overline{\bm{x}}_n).
\end{equation}
Correspondingly, we have
\begin{equation}
    \overline{\bm{x}}_{n+1} = \overline{\bm{\Phi}}(\overline{\bm{x}}_n) + \bm{m}(\overline{\bm{x}}_n,\, \bm{x}'_n).
\end{equation}
The model error accounts for the influence of $\bm{x}'_n$ on the evolution of $\overline{\bm{x}}_n$, which is not accounted for by $\overline{\bm{\Phi}}$, as well as any other error introduced by $\overline{\bm{\Phi}}$.

In order to account for the effect of the unresolved dynamics (i.e. the dynamics of $\bm{x}'$), we must somehow approximate or estimate the model error and form a parameterized model
\begin{equation}\label{eq:param_dyn}
    \widetilde{\bm{x}}_{n+1} = \overline{\bm{\Phi}}(\widetilde{\bm{x}}_n) + \widetilde{\bm{m}}_n.
\end{equation}
This very general problem is ubiquitous across science and engineering, where employing a truly complete model is impractical. It is known variously as either parameterization or closure. 

In classical approaches deterministic approximations to the model error are hypothesized in the form
\begin{equation}
    \bm{m}(\overline{\bm{x}}_n,\, \bm{x}'_n) \approx \widetilde{\bm{m}}_n = \bm{f}(\overline{\bm{x}}_n),
\end{equation}
for some function $\bm{f}$. In this way deterministic closures (or parameterizations) assume that the model error can be diagnosed with certainty from the instantaneous value of the resolved state alone. Since $\bm{\Phi}_1$ depends also on $\bm{x}'_n$, this is clearly not true exactly. Rather $\overline{\bm{x}}_n$ provides some limited information about $\bm{m}_n$, but uncertainty remains so long as $\bm{x}'_n$ is unknown. Intuitively, given a value of $\overline{\bm{x}}_n$, there are generally many compatible (physically realizable) values of $\bm{x}'_n$, and each of these values causes $\overline{\bm{x}}$ to evolve along a different trajectory.

Stochastic closures represent this uncertainty explicitly by modeling the model error (and hence the future of the resolved state, via~\eqref{eq:param_dyn}) probabilistically. Given a stochastic closure, the uncertainty induced in finite-time predictions by uncertain model error can be quantified by ensemble forecasting. However, the benefit of stochastic closures is not limited to allowing uncertainty quantification as an optional extra beyond deterministic predictions. Rather, as we argue in this work, the probabilistic setting of stochastic closures allows for calibration procedures that are aligned with the inherently uncertain dynamics of the coarse-grained state.
Put differently, there is an inherent conflict in trying to fit deterministic models to match the stochastic evolution of coarse-grained chaotic systems.

\begin{figure*}[t!]
\centering
\includegraphics[width=\textwidth]{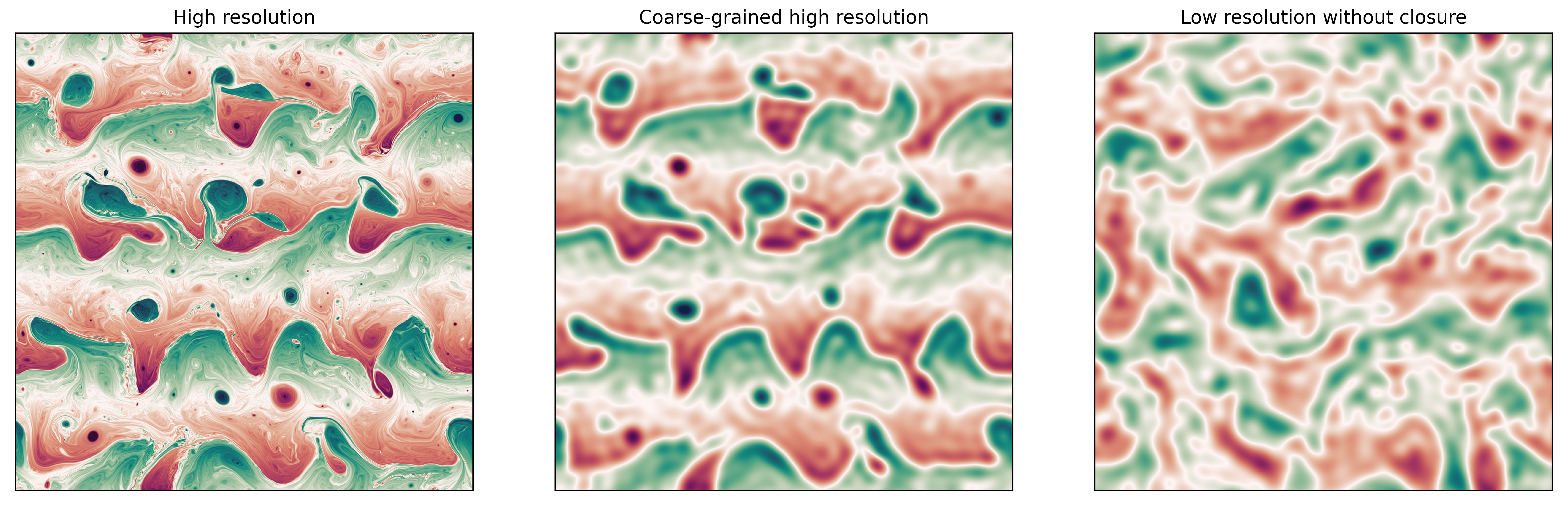}
\caption{Snapshots of upper-layer potential vorticity anomaly in the quasi-geostrophic model at high resolution (left), after coarse graining (center), and from a free-running low-resolution simulation without parameterization (right).}
\label{fig:coarse-graining}
\end{figure*}

\section{Data-Driven Closures}
Constructing closures that accurately reflect the uncertain future of the coarse-grained state is challenging. The goal of probabilistic forecasting is to design a stochastic closure such that realizations of the parameterized model~\eqref{eq:param_dyn} are consistent with the conditional law of future coarse-grained trajectories
\begin{align}\label{eq:conditional_law}
    \mathbb{P}(\overline{\bm{x}}_{n+1},\,\dots,\overline{\bm{x}}_{n+m}\mid\overline{\bm{x}}_n=\bm{x}^*),
\end{align}
which reflect how likely future trajectories are given a coarse-grained initial condition $\bm{x}^*$.
The difficulty is that these conditional probabilities usually do not satisfy assumptions that would simplify their representation in models. In particular, the coarse-grained process $\{\overline{\bm{x}}_n\}$ is generally non-Markovian. This means that a factorization of~\eqref{eq:conditional_law} into a product of one-step transition probabilities $\prod_i\mathbb{P}(\overline{\bm{x}}_{n+i+1}\mid\overline{\bm{x}}_{n+i})$ is not valid. 

\subsection*{Offline vs. Online learning}
In standard approaches, non-Markovianity is ignored; a model of the one-step transition probability is constructed, and this is sampled iteratively at each time step to produce predictions over multiple time steps. We refer to this approach as \textit{offline learning}. Deterministic closures can be considered a special case, where the one-step transition probability has zero variance. With some more recent exceptions, most existing work on data-driven closure modeling has followed the offline-learning framework~\cite{rasp2018deep, brenowitz2019spatially, yuval2020stable, gagne2020machine, zanna2020data, guillaumin2021, brolly2025a}. 

A general Markovian approximation $\{\widetilde{\bm{x}}_n\}$ to the coarse-grained process is determined by its own one-step transition probability $\widetilde{\mathbb{P}}(\widetilde{\bm{x}}_{n+1}\mid\widetilde{\bm{x}}_n=x^*)$, which is typically different from that of the coarse-grained process. Consider a parametric family of Markov kernels $\mathbb{Q}_{\bm{\theta}}(\widetilde{\bm{x}}_{n+1}\mid\widetilde{\bm{x}}_n=x^*)$ with parameters $\bm{\theta}$. When the aim of predictions is to approximate multi-step conditional probabilities as in~\eqref{eq:conditional_law}, this involves optimizing an objective which depends on predictions over multiple steps of the parameterized model. This trajectory-based approach is known as \textit{online} or \textit{a posteriori learning}~\cite{rasp2020coupled, kochkov2021machine, frezat2022, maddison2026}.

\subsection*{Scoring rules}

The choice of an objective (or loss function) for calibrating data-driven closures is critical. For deterministic closures, loss functions based on mean squared error (MSE) are most common. However, given that the coarse-grained process is intrinsically stochastic, it is more appropriate to evaluate predictions using scoring rules for probabilistic forecasts~\cite{gneiting2007}. This has long been recognized in weather forecasting. An extensive literature now exists developing scoring rules and establishing their properties~\cite{good1952rational, gneiting2007, brocker2007scoring, du2021beyond}. A crucial property of certain scoring rules is strict propriety~\cite{brocker2007scoring}: strictly proper scoring rules are uniquely optimized (in expectation) by the correct probabilistic forecast. A canonical example is the negative log-likelihood, whose minimization mirrors standard maximum likelihood estimation. However, the likelihood is usually intractable in closure modeling due to high dimensionality and nonlinearity, and it cannot be easily estimated from model samples. Fortunately, strictly proper scoring rules exist that can be estimated directly from finite samples~\cite{ferro2014}. The \textit{energy score} has emerged as a highly effective choice, since it (i) is strictly proper, (ii) is applicable to multivariate data, and (iii) admits a simple unbiased estimator.

Given a forecast measure $\nu$ and an observation $\bm{y}$, the energy score is defined
\begin{align}
    \mathcal{S}(\nu,\,\bm{y}) = \mathbb{E}_\nu\|\bm{Y}-\bm{y}\|-\frac{1}{2}\mathbb{E}_\nu\|\bm{Y}-\bm{Y}'\|,
\end{align}
where both $\bm{Y}\sim\nu$ and $\bm{Y}'\sim\nu$ independently. Given a finite ensemble of $S$ samples from $\nu$, $\{\bm{y}^{(1)},\,\dots,\,\bm{y}^{(S)}\}$, the estimator
\begin{align}\label{eq:S_hat}
\widehat{\mathcal{S}}\big(\{\bm{y}^{(s)}\},\,\bm{y}\big)
&= \frac{1}{S}\sum_{s=1}^S \big\|\bm{y}^{(s)} - \bm{y}\big\| \notag \\
&\quad - \frac{1}{2S(S-1)}\sum_{s=1}^S\sum_{s'=1}^S
\big\|\bm{y}^{(s)}-\bm{y}^{(s')}\big\| ,
\end{align}
satisfies $\mathbb{E}_{\nu}[\widehat{\mathcal{S}}]=\mathcal{S}$ for any $S\geq2$.

Using $\widehat{\mathcal{S}}$, we can evaluate the predictions of a stochastic model at a given lead time using $S\geq2$ independent samples. Moreover, the energy score naturally accommodates deterministic models: for a point prediction $\widehat{\bm{y}}$, it reduces to the Euclidean distance $\mathcal{S}(\widehat{\bm{y}},\,\bm{y}) = \|\widehat{\bm{y}}-\bm{y}\|$.

For the purposes of online learning as described above, and given coarse-grained data in the form $\{\overline{\bm{x}}_n\}_{n=0}^N$, we define a loss function
\begin{align}\label{eq:loss}
    L\big(\bm{\theta}\,;\,\{\overline{\bm{x}}_n\}_{n=0}^N\big) = \frac{1}{N} \sum_{j=1}^{N/w} \sum_{n=(j-1)w+1}^{jw} \widehat{\mathcal{S}}\big(\{\widetilde{\bm{x}}_n^{(s)}\}_{s=1}^S,\,\overline{\bm{x}}_n\big),
\end{align}
where $\widetilde{\bm{x}}_{(j-1)w}^{(s)}=\overline{\bm{x}}_{(j-1)w}$ for $j=1,\,\dots,\,N/w$ and for each of the $S$ realizations. Here $w$ denotes the window length measured in coarse-model time steps; when reporting results in hours or days we refer to the corresponding physical duration. For stochastic closures ($S \ge 2$), $\widehat{\mathcal{S}}$ is the unbiased estimator defined in~\eqref{eq:S_hat}. For deterministic closures ($S=1$), we extend this notation by evaluating the true energy score functional $\mathcal{S}$ analytically on the exact Dirac measure, yielding the Euclidean distance: $\widehat{\mathcal{S}}\big(\{\widetilde{\bm{x}}_n^{(1)}\},\,\overline{\bm{x}}_n\big) = \|\widetilde{\bm{x}}_n^{(1)}-\overline{\bm{x}}_n\|$. 

The online loss~\eqref{eq:loss} can be applied to any model with parameters $\bm{\theta}$ to assess predictions at all lead times up to a horizon $w$, which we term the \textit{window length}. In practice, the data time series is partitioned into sequential windows of length $w$; for each window, the model is initialized with the true observed state and integrated forward. The energy score is calculated at each lead time, and the average across the window yields the loss. Note that~\eqref{eq:loss} reduces to an offline loss in the case $w=1$.

\subsection*{Generative closures}
A stochastic closure must specify a conditional law for the model error given the resolved state. We adopt a generative formulation in which the model error is produced by a deterministic function of the resolved state and an auxiliary random input. Let $\bm{\xi}_n$ denote an independent noise variable with known distribution (e.g., standard Gaussian), and define
\begin{align}\label{eq:m_tilde}
    \widetilde{\bm{m}}_n = \bm{G}_{\bm{\theta}}(\widetilde{\bm{x}}_n,\, \bm{\xi}_n).
\end{align}
This closure is a conditional generative model: randomness enters only through $\bm{\xi}_n$, while $\bm{G}_{\bm{\theta}}$ is a deterministic function governed by parameters $\bm{\theta}$. Deterministic closures arise as the special case in which $\bm{G}_{\bm{\theta}}$ is independent of $\bm{\xi}_n$.

\begin{figure}
    \centering
    \includegraphics[width=\linewidth]{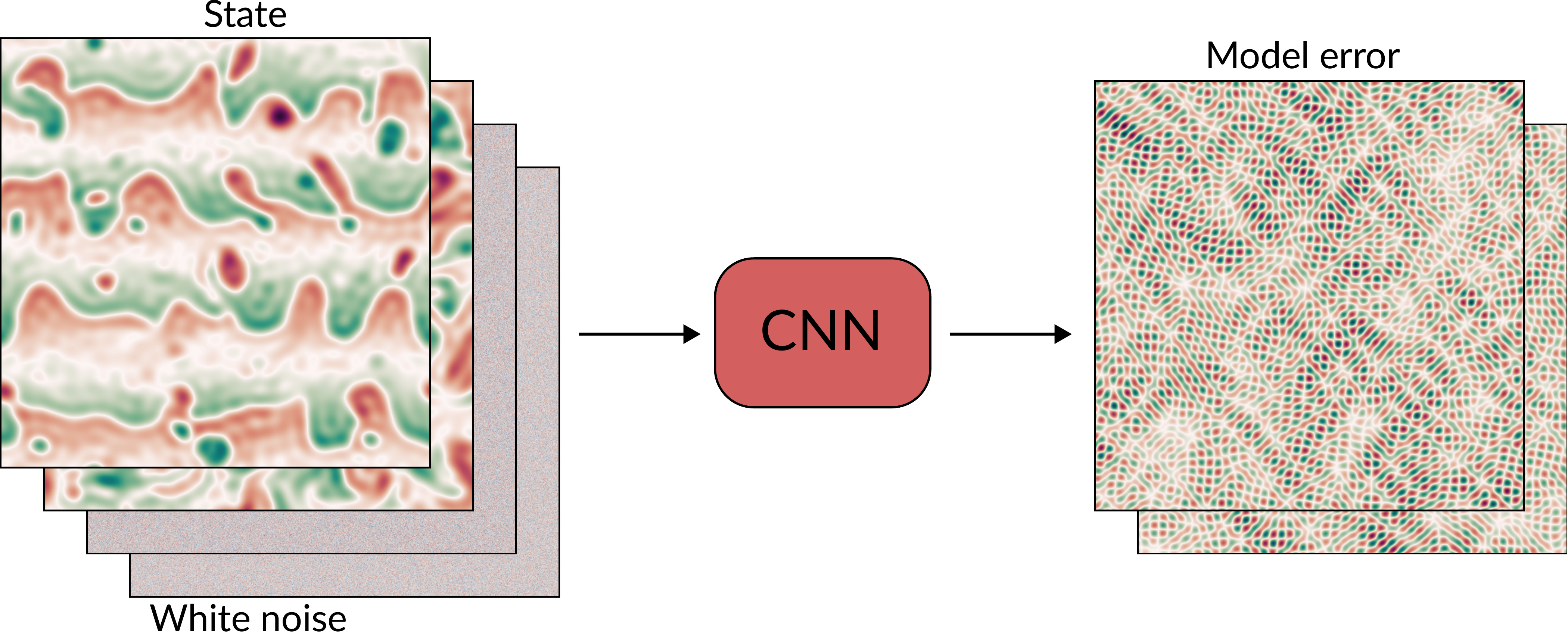}
    \caption{Schematic of the generative closures used in the quasi-geostrophic turbulence experiments. This is an example of~\eqref{eq:m_tilde}, where $\bm{G}_{\bm{\theta}}$ is a convolutional neural network, $\overline{\bm{x}}_n$ is the instantaneous potential vorticity anomaly, and $\bm{\xi}$ is a random field with $\operatorname{dim}\bm{\xi}_n=\operatorname{dim}\overline{\bm{x}}_n$ with each component an independent standard Gaussian random variable.}
    \label{fig:network}
\end{figure}

The primary advantage of coupling generative architectures with sample-based scoring rules is that it bypasses the need for a tractable likelihood. Because the energy score can be evaluated purely from model realizations, we are free to employ highly flexible neural networks to represent the complex conditional distribution of $\widetilde{\bm{x}}_{n+1}$. Furthermore, this formulation places deterministic and stochastic closures within a unified mathematical framework, differing only in whether the learned mapping actively depends on the auxiliary noise.

Importantly, this generative approach does not impose Gaussianity, linearity, or additive structure on the model error. The only structural assumption enforced here is Markovianity. Non-Markovianity of the true coarse-grained dynamics is not represented explicitly in this formulation; rather, trajectory-based calibration encourages the Markovian closure to compensate for missing memory over finite horizons.

\subsection*{Asymptotic properties of trajectory-based objectives}

The core mechanism of online learning is the optimization of an objective function that accumulates loss over a finite time window. Because the parameters $\bm{\theta}$ are shared across steps of the window, a model is optimized when it balances loss across lead times. In chaotic systems with mixing dynamics, however, the true future state $\overline{\bm{x}}_{n+m}$ asymptotically decorrelates from the initial condition $\overline{\bm{x}}_n$. As this decorrelation occurs, precise trajectory tracking becomes impossible. Consequently, loss at large lead times is not governed by short-term forecasting skill, but rather by how the objective function evaluates the model's longer-term predictions against the unpredictable future evolution of the true coarse-grained dynamics. To understand how deterministic learning objectives fail, it is instructive to analyze this asymptotic regime, since even with finite window lengths the imprint of this limit is critical.

We now analyze long-lead training under a common stationary setup. We assume the complete system preserves a probability measure $\mu$ on the full state space. We let $\bar{\mu}$ denote its pushforward onto the coarse-grained space and assume $\bar{\mu}$ has finite second moment. We assume the coarse-grained initial condition is distributed as $\bar{\mu}$ and compare the true coarse-grained dynamics with a Markovian model initialized from the exact resolved state and evolved according to
$$ \widetilde{\bm{x}}_{n+k+1} = \bm{F}_{\bm{\theta}}(\widetilde{\bm{x}}_{n+k},\,\bm{\xi}_{n+k}), \qquad \widetilde{\bm{x}}_n = \overline{\bm{x}}_n, $$ where the noise variables are independent.

\begin{proposition}[Degeneracy of long-lead mean-squared-error training]\label{prop:1}
Assume further that the coarse-grained state decorrelates under the dynamics in the sense that
    \begin{equation*}
        \mathbb{E}\big[\overline{\bm{x}}_{n+m} \mid \overline{\bm{x}}_n\big] \;\xrightarrow{L^2(\bar{\mu})}\; \mathbb{E}_{\bar{\mu}}[\overline{\bm{x}}] \quad \text{as} \quad m \to \infty.
    \end{equation*}
Denote the MSE loss at lead time $m$ as
\begin{equation*}
    \mathcal{J}_m(\bm{\theta}) = \mathbb{E}\big[\| \widetilde{\bm{x}}_{n+m} - \overline{\bm{x}}_{n+m}\|^2\big].
\end{equation*}
This objective admits a decomposition of the form
\begin{equation*}
\begin{split}
    \mathcal{J}_m(\bm{\theta}) &= \mathbb{E}\Big[\big\| \mathbb{E}\big[\widetilde{\bm{x}}_{n+m}\mid \overline{\bm{x}}_n\big] - \mathbb{E}\big[\overline{\bm{x}}_{n+m}\mid \overline{\bm{x}}_n\big]\big\|^2\Big] \\
    &\quad + \mathbb{E}\Big[\operatorname{Tr}\big(\operatorname{Cov}(\widetilde{\bm{x}}_{n+m}\mid \overline{\bm{x}}_n)\big)\Big] + C_m,
\end{split}
\end{equation*}
where $ C_m $ is independent of $ \bm{\theta} $.

In particular, predictive variance enters the MSE loss as an additive nonnegative term. Under the stated decorrelation assumption, the target conditional mean in the first term approaches the climatological mean at long lead times. Thus, the long-lead contribution to the MSE increasingly favors forecasts with vanishing spread and predictive means close to climatology.
\end{proposition}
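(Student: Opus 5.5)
The decomposition follows from conditioning on the initial resolved state $\overline{\bm{x}}_n$ and using a conditional independence property of the model's forecast. Write $\bm{a}=\widetilde{\bm{x}}_{n+m}$, $\bm{b}=\overline{\bm{x}}_{n+m}$, and let $\bm{\mu}_a=\mathbb{E}[\bm{a}\mid\overline{\bm{x}}_n]$, $\bm{\mu}_b=\mathbb{E}[\bm{b}\mid\overline{\bm{x}}_n]$. I will first check that everything is square integrable. For $\bm{b}$ this holds because $\bar{\mu}$ has finite second moment and, by stationarity, $\bm{b}\sim\bar{\mu}$. For $\bm{a}$ I will assume $\mathcal{J}_m(\bm{\theta})<\infty$; otherwise both sides are infinite and there is nothing to prove.

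\textbf{Step 1.} Add and subtract the two conditional means to get
\begin{equation*}
\bm{a}-\bm{b}=(\bm{a}-\bm{\mu}_a)+(\bm{\mu}_a-\bm{\mu}_b)-(\bm{b}-\bm{\mu}_b).
\end{equation*}
Expand $\mathbb{E}[\|\bm{a}-\bm{b}\|^2\mid\overline{\bm{x}}_n]$ and use the tower property. The squared terms give:
\begin{itemize}
\item $\operatorname{Tr}\operatorname{Cov}(\bm{a}\mid\overline{\bm{x}}_n)$;
\item $\|\bm{\mu}_a-\bm{\mu}_b\|^2$;
\item $\operatorname{Tr}\operatorname{Cov}(\bm{b}\mid\overline{\bm{x}}_n)$.
\end{itemize}
The cross terms involving $\bm{\mu}_a-\bm{\mu}_b$ vanish. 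This is because $\bm{\mu}_a-\bm{\mu}_b$ is $\overline{\bm{x}}_n$-measurable, while $\bm{a}-\bm{\mu}_a$ and $\bm{b}-\bm{\mu}_b$ each have conditional mean zero.

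\textbf{Step 2.} This is the only step that needs structure. I must show the remaining cross term vanishes:
\begin{equation*}
\mathbb{E}\big[(\bm{a}-\bm{\mu}_a)\cdot(\bm{b}-\bm{\mu}_b)\mid\overline{\bm{x}}_n\big]=0.
\end{equation*}
It suffices that $\bm{a}$ and $\bm{b}$ are conditionally independent given $\overline{\bm{x}}_n$. This follows from the model setup:
\begin{itemize}
\item $\widetilde{\bm{x}}_{n+m}$ is a measurable function of $\overline{\bm{x}}_n$ and the noise $\bm{\xi}_n,\dots,\bm{\xi}_{n+m-1}$, obtained by iterating $\bm{F}_{\bm{\theta}}$ from $\widetilde{\bm{x}}_n=\overline{\bm{x}}_n$.
\item The noise is independent of the true system, i.e.\ of $(\overline{\bm{x}}_n,\bm{x}'_n)$ and hence of $\bm{b}$.
\end{itemize}
Under this reading, the conditional law of $(\bm{a},\bm{b})$ given $\overline{\bm{x}}_n$ factorizes. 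The conditional covariance then vanishes, and so does the cross term. I expect this to be the main point to state carefully: "the noise variables are independent" must be read as independent of each other and of the full-system trajectory. I will make that reading explicit.

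\textbf{Step 3.} Take expectations over $\overline{\bm{x}}_n\sim\bar{\mu}$. This gives the stated decomposition with
\begin{equation*}
C_m=\mathbb{E}\big[\operatorname{Tr}\operatorname{Cov}(\overline{\bm{x}}_{n+m}\mid\overline{\bm{x}}_n)\big].
\end{equation*}
$C_m$ depends only on the true dynamics and $\bar{\mu}$, so it is independent of $\bm{\theta}$. It is finite and bounded by $\mathbb{E}_{\bar{\mu}}\|\overline{\bm{x}}\|^2$.

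\textbf{Step 4 (the interpretive consequences).} The variance term is nonnegative, so at fixed conditional mean $\mathcal{J}_m$ is minimized by zero spread. For the long-lead statement, the decorrelation assumption says $\bm{\mu}_b\to\mathbb{E}_{\bar{\mu}}[\overline{\bm{x}}]$ in $L^2(\bar{\mu})$. By the triangle inequality in $L^2$, the first term then differs from $\mathbb{E}\|\bm{\mu}_a-\mathbb{E}_{\bar{\mu}}[\overline{\bm{x}}]\|^2$ by an amount that vanishes as $m\to\infty$, uniformly over models with uniformly bounded $\|\bm{\mu}_a\|_{L^2}$. Asymptotically, then, the $\bm{\theta}$-dependent part of $\mathcal{J}_m$ is minimized by predictive mean equal to climatology and zero conditional covariance. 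This gives the claimed degeneracy.
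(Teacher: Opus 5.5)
Your proposal is correct and follows essentially the same route as the paper's proof. You condition on $\overline{\bm{x}}_n$, expand via the conditional bias--variance identity, kill the cross-covariance using the conditional independence of $\widetilde{\bm{x}}_{n+m}$ (a function of $\overline{\bm{x}}_n$ and the injected noise) and $\overline{\bm{x}}_{n+m}$ (a function of $(\overline{\bm{x}}_n,\bm{x}'_n)$), and take $C_m$ to be the expected conditional target variance. Your integrability check and the quantitative $L^2$ estimate in Step 4 are minor refinements the paper leaves implicit.
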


A detailed proof of this proposition is provided in the SI Appendix. 

The decomposition in Proposition~\ref{prop:1} highlights the mechanism by which deterministic training over extended windows becomes systematically biased. At each lead time, the MSE contains two parameter-dependent contributions: one penalizing the discrepancy between the model's conditional mean and the target's conditional mean, and another directly penalizing predictive variance. As lead time increases, the target conditional mean becomes less dependent on the initial state and converges to the constant climatological mean. The long-lead terms in a time-averaged MSE objective therefore increasingly favor forecasts with suppressed spread and means close to climatology. Because the same parameters govern every step of the model integration, this long-lead pressure feeds back onto the learned dynamics, biasing them toward excess dissipation and helping explain the over-smoothing widely observed in deterministic weather and climate emulators.

\textbf{Remark on Generality:}
Although Proposition 1 is stated for the mean squared error, the SI Appendix shows that the same long-lead collapse holds for any pointwise discrepancy loss, including the Euclidean distance used to train our deterministic baselines. In particular, we show that for any pointwise loss induced by a bivariate discrepancy function $ \ell(\,\cdot\,,\,\cdot\,) $, the asymptotic objective reduces to the expectation of the climatological risk field
$$
R(\widetilde{\bm{y}})
:=
\mathbb{E}_{\overline{\bm{y}} \sim \bar{\mu}}
\big[
\ell(\widetilde{\bm{y}}, \overline{\bm{y}})
\big].
$$
If $ R $ has a unique global minimizer $ \bm{c}^* $, then the long-lead optimum is attained only when the forecast distribution collapses to the Dirac measure at $ \bm{c}^* $. Metric losses are an important special case: when $\ell$ is a metric, $ \bm{c}^* $ is the corresponding Fr\'echet median of the invariant measure. Proper scoring rules are therefore required not merely to replace MSE, but to avoid the broader failure of pointwise trajectory matching.

In contrast to the systematic loss of variance induced by pointwise deterministic training, strictly proper scoring rules remove this structural bias. Assume now that the true lead-time conditional distributions converge to the invariant measure, and that the model's predictive distributions admit a well-defined long-lead limit. Proposition~\ref{prop:2} shows that, under the continuity and integrability assumptions stated in the SI Appendix, the asymptotic expected score decomposes into a constant depending only on the invariant measure and the scoring rule, plus the expected divergence induced by the scoring rule between the model's long-lead predictive distributions and $ \bar{\mu} $. Thus probabilistic trajectory-based training does not penalize predictive spread in itself; it penalizes only distributional mismatch with the true invariant measure.

To state the result, let $ \mathcal{P}_2 $ denote the set of Borel probability measures on the coarse-grained space with finite second moment. For each lead time $ m $, let $ Q_m(\cdot \mid \overline{\bm{x}}_n) $ denote the true lead-time conditional law, let $ \widetilde{\mathbb{P}}_{\bm{\theta},m}(\cdot \mid \overline{\bm{x}}_n) $ denote the model's $ m $-step predictive law, and define the corresponding expected score by
\begin{equation*}
    \mathcal{L}_m(\bm{\theta})
    :=
    \mathbb{E}_{\overline{\bm{x}}_n}
    \bigg[
        \mathbb{E}_{\overline{\bm{y}} \sim Q_m(\cdot \mid \overline{\bm{x}}_n)}
        \Big[
            \mathcal{S}\big(
                \widetilde{\mathbb{P}}_{\bm{\theta},m}(\cdot \mid \overline{\bm{x}}_n),
                \overline{\bm{y}}
            \big)
        \Big]
    \bigg].
\end{equation*}

\begin{proposition}[Asymptotic consistency of strictly proper scoring rules]\label{prop:2}
Under the common stationary setup above, let $ \mathcal{S}(P,\bm{y}) $ be a strictly proper scoring rule on $ \mathcal{P}_2 $.
Assume additionally that, for $ \bar{\mu} $-almost every resolved initial condition $ \overline{\bm{x}}_n $,
\begin{equation*}
    Q_m(\cdot \mid \overline{\bm{x}}_n) \xrightarrow{W_2} \bar{\mu}(\cdot)
\end{equation*}
as $ m \to \infty $, and that the model predictive laws admit a $ W_2 $ limit, denoted by $ \widetilde{\mathbb{P}}_{\bm{\theta},\infty}(\cdot \mid \overline{\bm{x}}_n) $, in the sense that
\begin{equation*}
    \widetilde{\mathbb{P}}_{\bm{\theta},m}(\cdot \mid \overline{\bm{x}}_n) \xrightarrow{W_2} \widetilde{\mathbb{P}}_{\bm{\theta},\infty}(\cdot \mid \overline{\bm{x}}_n).
\end{equation*}
Assume also that the continuity and integrability assumptions stated in the SI Appendix hold.

Then
\begin{equation*}
    \lim_{m\to\infty}\mathcal{L}_m(\bm{\theta})
    =
    \mathbb{E}_{\overline{\bm{x}}_n}
    \bigg[
        d_{\mathcal S}\big(
            \widetilde{\mathbb{P}}_{\bm{\theta},\infty}(\cdot \mid \overline{\bm{x}}_n),
            \bar{\mu}
        \big)
    \bigg]
    +
    \mathbb{E}_{\overline{\bm{y}} \sim \bar{\mu}}
    \big[
        \mathcal{S}(\bar{\mu}, \overline{\bm{y}})
    \big],
\end{equation*}
where $ d_{\mathcal S} $ denotes the divergence induced by $ \mathcal{S} $. Consequently, the parameter-dependent part of the asymptotic objective is exactly the expected divergence between the model's long-lead predictive law and the invariant measure.
\end{proposition}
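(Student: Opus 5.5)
The plan is to first pass to the limit inside the expected score and then apply the defining identity of the divergence induced by a proper scoring rule. Write the expected score of a forecast $P$ against outcome law $Q$ as
\begin{equation*}
    \mathcal{S}(P,Q) := \mathbb{E}_{\overline{\bm{y}}\sim Q}\big[\mathcal{S}(P,\overline{\bm{y}})\big].
\end{equation*}
The induced divergence is then defined by $d_{\mathcal S}(P,Q) := \mathcal{S}(P,Q) - \mathcal{S}(Q,Q)$. Strict propriety on $\mathcal{P}_2$ means $d_{\mathcal S}(P,Q)\ge 0$, with equality iff $P=Q$. With this notation,
\begin{equation*}
    \mathcal{L}_m(\bm{\theta}) = \mathbb{E}_{\overline{\bm{x}}_n}\Big[\mathcal{S}\big(\widetilde{\mathbb{P}}_{\bm{\theta},m}(\cdot\mid\overline{\bm{x}}_n),\,Q_m(\cdot\mid\overline{\bm{x}}_n)\big)\Big].
\end{equation*}
The argument therefore splits into a pointwise limit and an exchange of limit and outer expectation.

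\textbf{Pointwise step.} Fix a $\bar\mu$-typical $\overline{\bm{x}}_n$ for which both $W_2$ convergences hold. The continuity assumption I would take from the SI is that the map $(P,Q)\mapsto\mathcal{S}(P,Q)$ is jointly continuous on $\mathcal{P}_2\times\mathcal{P}_2$ with the $W_2$ topology. Under that assumption,
\begin{equation*}
    \mathcal{S}\big(\widetilde{\mathbb{P}}_{\bm{\theta},m},Q_m\big) \to \mathcal{S}\big(\widetilde{\mathbb{P}}_{\bm{\theta},\infty},\bar\mu\big).
\end{equation*}
For the energy score, the motivating case, I would check this directly. The score can be written via $\mathbb{E}\|Y-Z\|$ terms with independent $Y\sim P$ and $Z\sim Q$. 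These functionals are continuous under $W_2$ (indeed $W_1$) convergence: take optimal couplings $Y_m\to Y$ and $Z_m\to Z$ in $L^2$. Then $\big|\,\|Y_m-Z_m\|-\|Y-Z\|\,\big| \le \|Y_m-Y\|+\|Z_m-Z\|$, and the expectation of the right-hand side tends to zero. This verifies the abstract hypothesis in the concrete case.

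\textbf{Exchange of limit and expectation.} I would apply dominated convergence using the SI integrability assumption. The natural dominating function is a bound of the form
\begin{equation*}
    \big|\mathcal{S}(\widetilde{\mathbb{P}}_{\bm{\theta},m},Q_m)\big| \le h(\overline{\bm{x}}_n), \qquad h\in L^1(\bar\mu),
\end{equation*}
uniformly in $m$. For the energy score this follows if the second moments of $\widetilde{\mathbb{P}}_{\bm{\theta},m}(\cdot\mid\overline{\bm{x}}_n)$ and $Q_m(\cdot\mid\overline{\bm{x}}_n)$ are uniformly controlled by an integrable function. For $Q_m$ this is automatic from stationarity: $\mathbb{E}\|\overline{\bm{x}}_{n+m}\|^2$ equals the finite second moment of $\bar\mu$. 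Uniform integrability would suffice in place of domination (Vitali). Once the limit passes inside, we obtain
\begin{equation*}
    \lim_{m\to\infty}\mathcal{L}_m(\bm{\theta}) = \mathbb{E}_{\overline{\bm{x}}_n}\big[\mathcal{S}(\widetilde{\mathbb{P}}_{\bm{\theta},\infty},\bar\mu)\big].
\end{equation*}
Adding and subtracting $\mathcal{S}(\bar\mu,\bar\mu)$ gives exactly the claimed decomposition. The constant term is $\mathbb{E}_{\overline{\bm{y}}\sim\bar\mu}[\mathcal{S}(\bar\mu,\overline{\bm{y}})]$, which is finite because $\bar\mu\in\mathcal{P}_2$, and it is independent of $\bm{\theta}$ and of $\overline{\bm{x}}_n$. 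The final sentence of the proposition then follows immediately. Strict propriety adds the interpretation that the minimum is attained iff $\widetilde{\mathbb{P}}_{\bm{\theta},\infty}(\cdot\mid\overline{\bm{x}}_n)=\bar\mu$ for $\bar\mu$-a.e.\ initial condition.

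\textbf{Main obstacle.} The hard part is justifying the limit exchange, that is, formulating continuity and domination hypotheses that are both weak enough to be natural and strong enough to work. A subtlety is measurability of $\overline{\bm{x}}_n\mapsto\mathcal{S}(\widetilde{\mathbb{P}}_{\bm{\theta},m},Q_m)$, which follows from these being regular conditional distributions. I would state the assumptions as joint $W_2$-continuity of the expected score together with uniform integrability of the scores. I would then verify both for the energy score, using the Lipschitz bound $\mathcal{S}(P,\bm{y})\le \mathbb{E}_P\|\bm{Y}\|+\|\bm{y}\|$ and stationarity.
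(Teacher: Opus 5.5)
Your proposal is correct and follows the paper's proof essentially step for step. The SI hypotheses you guessed are exactly the paper's: joint $W_2\times W_2$ continuity of $(P,Q)\mapsto\mathbb{E}_{\bm{y}\sim Q}[\mathcal{S}(P,\bm{y})]$, and uniform integrability of the conditional expected scores under $\bar{\mu}$. The paper likewise gets the pointwise limit from continuity, passes it through the outer expectation by uniform integrability, and then applies $\mathbb{E}_{Q}[\mathcal{S}(P,\cdot)]=d_{\mathcal S}(P,Q)+\mathbb{E}_{Q}[\mathcal{S}(Q,\cdot)]$ with $P=\widetilde{\mathbb{P}}_{\bm{\theta},\infty}(\cdot\mid\overline{\bm{x}}_n)$ and $Q=\bar{\mu}$.

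Your energy-score verification goes beyond the paper, and one remark there needs correcting. Stationarity does not give a single dominating function for the conditional second moments of $Q_m$; it only bounds them in $L^1(\bar{\mu})$. By conditional Jensen, however, it does bound the conditional first moments in $L^2(\bar{\mu})$ uniformly in $m$, which is exactly the uniform integrability the energy score needs on the $Q_m$ side.
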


Here
\begin{equation*}
    d_{\mathcal S}(P,Q)
    :=
    \mathbb{E}_{\bm{y} \sim Q}
    \big[
        \mathcal{S}(P,\bm{y})
    \big]
    -
    \mathbb{E}_{\bm{y} \sim Q}
    \big[
        \mathcal{S}(Q,\bm{y})
    \big]
    \ge 0,
\end{equation*}
with equality if and only if $ P = Q $. The second term in Proposition~\ref{prop:2} is therefore the generalized entropy induced by $ \mathcal{S} $ evaluated at the invariant measure and is independent of the model parameters.

Convergence in the 2-Wasserstein metric ($ W_2 $) combines weak convergence with convergence of second moments. From a physical perspective, this means that agreement at the optimum is not limited to weak distributional agreement, but extends to quadratic quantities such as variance and kinetic energy. If the divergence term in Proposition~\ref{prop:2} vanishes, then the model's long-lead predictive distribution coincides with $ \bar{\mu} $, and its covariance matches the climatological covariance $ \operatorname{Cov}_{\bar{\mu}}(\overline{\bm{x}}) $. More generally, for parametric models, the parameter-dependent part of the asymptotic objective is exactly the expected divergence from $ \bar{\mu} $. Minimizing a strictly proper score therefore drives the model's asymptotic distribution as close as the model class allows to the true invariant measure, rather than toward a variance-collapsed deterministic climatology.

A detailed proof is provided in the SI Appendix.

\section{Numerical results}
We now demonstrate the critical role of stochasticity and online learning in data-driven closure of quasi-geostrophic (QG) turbulence. QG turbulence, developed as an idealized model of large-scale atmosphere/ocean dynamics, is a canonical example of a high-dimensional nonlinear physical system, governed by an active scalar transport-type partial differential equation. Figure~\ref{fig:coarse-graining} shows snapshots of potential vorticity anomaly in the QG model (i) at high resolution (which we consider the complete model system, corresponding to~\eqref{eq:complete-system}), (ii) after applying a coarse-graining operator (corresponding to the coarse-grained dynamics~\eqref{eq:coarse-grained}), and (iii) from an equivalent low-resolution simulation without closure (corresponding to~\eqref{eq:param_dyn} with $\widetilde{\bm{m}}\equiv0$). The dynamics and statistics of the low-resolution model are markedly different from those of the coarse-grained system. We are interested in learning closures from coarse-grained simulations to reduce this discrepancy.

In order to avoid conflation with other approximation/estimation issues, we employ highly flexible generative parameterizations of the form~\eqref{eq:m_tilde} with $\bm{G}_{\bm{\theta}}$ given by a deep convolutional neural network, and use a large amount of data ($100$ years of coarse-grained simulation data) to optimize the network parameters $\bm{\theta}$ with respect to the loss~\eqref{eq:loss}.

We train both deterministic and stochastic closures in this way with various window lengths ($w$ in~\eqref{eq:loss}). Deterministic closures are constructed straightforwardly by training in the same way but without the random input $\bm\xi_n$ in~\eqref{eq:m_tilde} and setting $S=1$, thereby explicitly optimizing the deterministic Euclidean distance reduction of the energy score.
Because the stochastic architecture is structurally identical to this deterministic baseline—differing only by the inclusion of the auxiliary noise field—the significant improvements demonstrated below are attributable solely to the probabilistic training objective rather than added architectural complexity.

In the following sections we evaluate the parameterized models using an additional $100$ years of simulation data not used in training. Full details of the QG model, the coarse-graining operator and the training procedure are given in the Supplementary Information.

\subsection*{Finite-time predictive skill}
We evaluate the finite-time predictive skill of the parameterized models by computing the mean energy score on the validation data.
The validation data is partitioned into 30-day windows. For each window, 20 realizations of each parameterized model are initialized from the true state, and the energy score estimator in~\eqref{eq:S_hat} is computed at lead times from $1$ to $30$ days.
The resulting scores are averaged over the collection of windows. Fig.~\ref{fig:energy_score_vs_w} shows these mean energy scores for the stochastic closures for a range of window lengths $w$. In particular, $w=2$ hours corresponds to a single time step, and therefore offline learning. As $w$ is increased mean energy scores are seen to improve (decrease) across a range of lead times, until at window lengths on the order of several weeks (hundreds of model time steps) the performance plateaus.

\begin{figure}
    \centering
    \includegraphics[width=\linewidth]{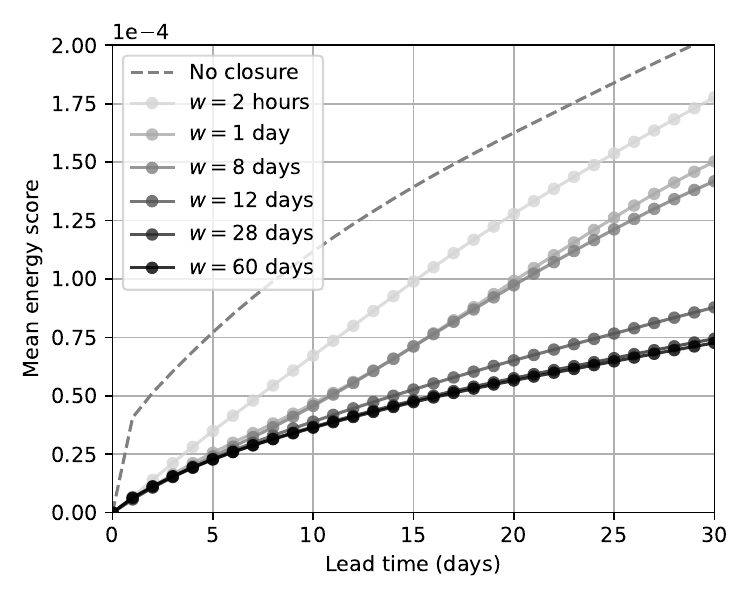}
    \caption{Mean energy score as a function of lead time for stochastic closures trained with various window lengths $w$.}
    \label{fig:energy_score_vs_w}
\end{figure}

In Fig.~\ref{fig:energy_score} mean energy scores are shown for deterministic and stochastic closures, as well as for the coarse model without closure. In particular, the scores shown correspond to closures trained with the value of $w$ which optimizes long-term error in the kinetic energy spectrum, as discussed in the next section. While both deterministic and stochastic closures lead to improvements in forecasting skill, the stochastically parameterized models outperform the deterministic models significantly at all lead times considered.

\begin{figure}
    \centering
    \includegraphics[width=\linewidth]{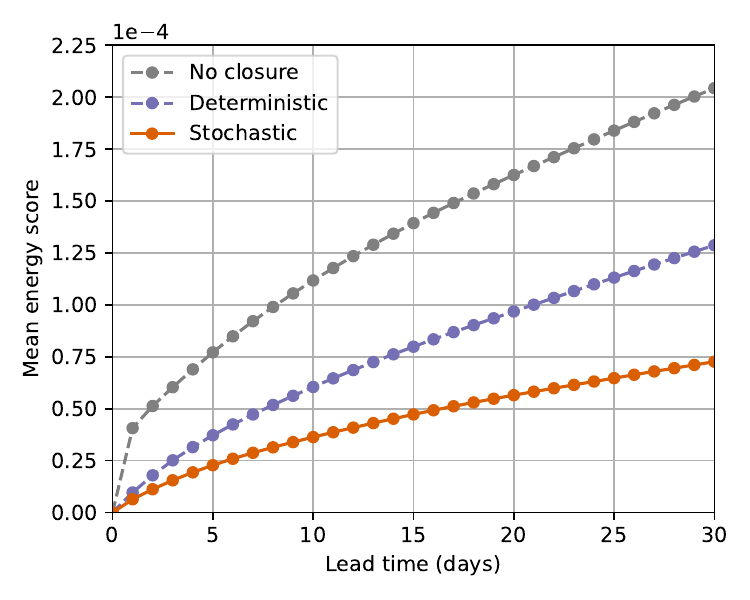}
    \caption{Mean energy score as a function of lead time for online-trained deterministic and stochastic closures, as well as for the coarse model without a closure.}
    \label{fig:energy_score}
\end{figure}

\subsection*{Stationary statistics}
We now consider the long-term consistency of the parameterized models with the coarse-grained dynamics by comparing time-averaged kinetic energy spectra. In particular, we consider the depth-averaged isotropic kinetic energy spectrum, which we denote $\overline{E}(\kappa)$ for the true coarse-grained dynamics and $\widetilde{E}(\kappa)$ in the case of a parameterized model, with $\kappa^2=k_x^2+k_y^2$. We define an error between the two as
\begin{align}
    \Delta E = \frac{1}{\kappa_c}\int_{0}^{\kappa_{\text{c}}}\,\left[\log \frac{\widetilde{E}(\kappa)}{\overline{E}(\kappa)}\right]^2\,\mathrm{d}\kappa,
\end{align}
where $\kappa_{\text{c}}=\frac{2}{3}k_{\text{max}}$ is a cut-off wavenumber. This definition is dimensionless and chosen to be sensitive to errors across a range of wavenumbers.

Fig.~\ref{fig:spec_errors} shows $ \Delta E $ as a function of window length for both deterministic and stochastic closures. The error is also shown for the coarse model without closure for reference. In order to compute these errors, the coarse model was run with each closure for $ 100 $ years. For deterministic closures, window lengths shorter than $ 4 $ days produced models that became unstable before the integrations were completed. For stochastic closures, the same occurred for window lengths shorter than $ 2 $ days, including the offline model at $ w = 2 $ hours. These cases appear as missing points in Fig.~\ref{fig:spec_errors}, since $ \Delta E $ could not be computed once the simulations became unstable. Thus, stochasticity alone is not sufficient to stabilize short-window state-only closures: sufficiently long trajectory-based calibration is still required to accommodate the temporal mismatch between the Markovian closure and the coarse-grained dynamics.

For deterministic closures, an optimal window length appears at an intermediate value of about $ w = 24 $ days, where $ \Delta E $ is noticeably improved relative to the coarse model without closure. This intermediate optimum reflects the compromise imposed by the temporally averaged pointwise loss. At short windows, the model is driven to track realized sample trajectories. As the window is lengthened, the long-lead terms highlighted by the SI generalization of Proposition 1 increasingly penalize predictive spread and deviations from climatology.
In the present setting of QG turbulence, much of the variability being averaged over is concentrated at relatively small scales, where unresolved fluctuations are strongest. The deterministic model is therefore pushed toward a conditional average that preferentially damps those rapidly varying components. Because the same parameters govern every lead time in the window, this long-lead pressure feeds back onto the learned dynamics as a whole, producing excess dissipation and the overly smooth states observed for larger window lengths.

This issue of over-smoothing has been widely observed in the emulation of weather models~\cite{lam2023,bi2023accurate,chantry2025aifs,allen2024assessing} and in parameterization in atmospheric and oceanic modeling~\cite{kochkov2024,maddison2026}. As established in the SI Appendix, this phenomenon is a direct consequence of the degeneracy of any pointwise loss at long lead times, which forces the model to collapse its predictive variance. In contrast, Proposition~\ref{prop:2} shows that strictly proper scoring rules remove this long-lead bias against predictive spread by aligning the asymptotic objective with the invariant measure.

Consistent with this, once the training window is long enough to stabilize the model, the stochastic closures show a continued reduction in $ \Delta E $ as $ w $ is increased, before approaching a broad plateau at the largest window lengths considered. The noise visible at the longest window lengths is practical rather than structural: as $ w $ grows, the fixed $ 100 $-year training record contains proportionally fewer disjoint trajectories of length $ w $, making the online optimization noisier.

\begin{figure}
    \centering
    \includegraphics[width=\linewidth]{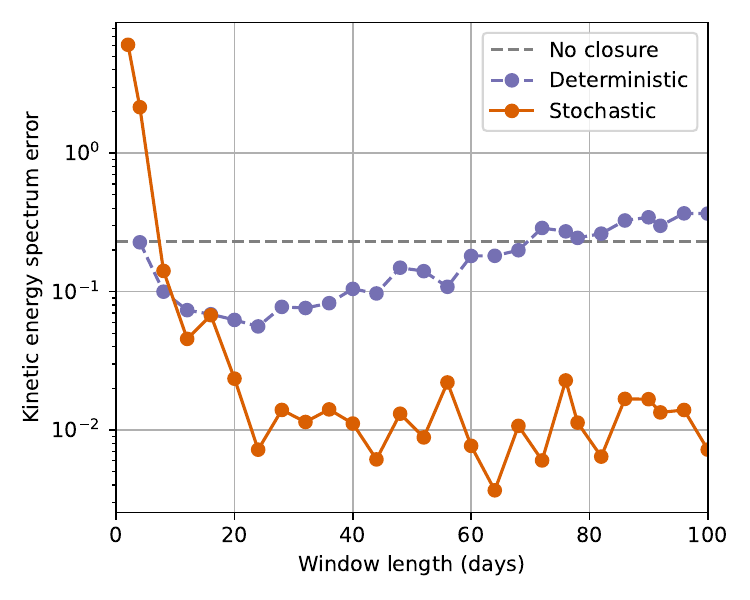}
    \caption{Errors in kinetic energy spectrum as a function of training window length $w$, for deterministic and stochastic closures, as well as without closure.}
    \label{fig:spec_errors}
\end{figure}

In Fig.~\ref{fig:specs} we show the kinetic energy spectrum for the true coarse-grained system, the coarse model without closure, and with deterministic and stochastic closures. For each closure we show the spectrum corresponding to the window length $w$ yielding the lowest spectrum error $\Delta E$. Even with this optimally tuned $w$, the deterministic closure is unable to recover the kinetic energy spectrum accurately, exhibiting a substantial, artificial decrease in energy at scales smaller than approximately $25\,\mathrm{km}$. In contrast, the stochastic closure achieves the correct distribution of energy down to the dissipation scale.

\begin{figure}
    \centering
    \includegraphics[width=\linewidth]{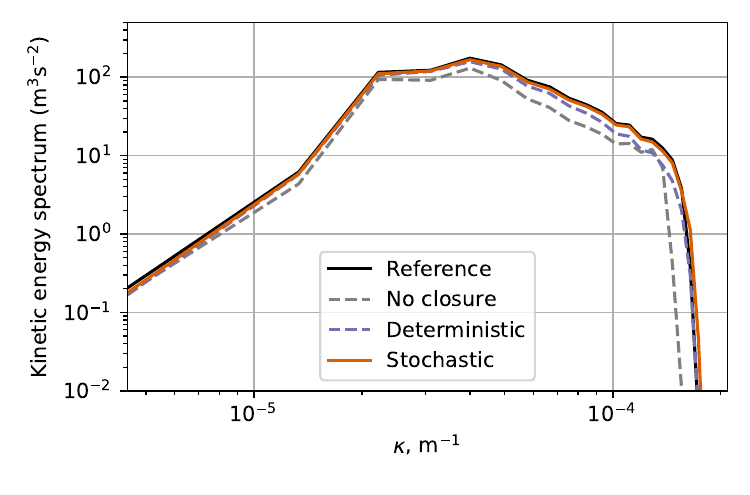}
    \caption{Height-averaged isotropic kinetic energy spectra. Spectra are shown for the true coarse-grained system, the coarse model without closure ($\Delta E = 2.29 \times 10^{-1}$), and the deterministic ($\Delta E = 5.60 \times 10^{-2}$) and stochastic ($\Delta E = 3.67 \times 10^{-3}$) closures. For both closures, results correspond to the window length $w$ yielding the lowest spectrum error.}
    \label{fig:specs}
\end{figure}

To complement the spectral analysis, Fig.~\ref{fig:side} visualizes snapshots of upper-layer PV anomaly fields after $ 100 $ years of simulation with the best online-trained deterministic and stochastic closures, as well as in the true coarse-grained system and at low resolution without any closure. Offline-trained deterministic and stochastic closures are omitted because both became unstable before the end of the $ 100 $-year integrations. Note first the difference between the target process $ (\overline{\bm{x}}_n) $ and the unparameterized coarse simulation (corresponding to $ \widetilde{\bm{x}}_{n+1} = \overline{\bm{\Phi}}(\widetilde{\bm{x}}_n) $): the coherent structure of jets and eddies is lost at low resolution without closure. The online-trained deterministic closure led to overly smooth fields, a direct physical manifestation of the variance-loss tendency described by Proposition~\ref{prop:1}. By contrast, the online-trained stochastic closure sustains a physically realistic flow field that is visually consistent with the true coarse-grained dynamics. This comparison encapsulates the two structural limitations identified above: short-window state-only training fails to accommodate unresolved memory, while deterministic pointwise trajectory training suppresses natural variability. Probabilistic trajectory-based calibration avoids both pathologies and maintains both large-scale coherent structures and small-scale turbulent fluctuations.

\begin{figure}
\centering
\includegraphics[width=\linewidth]{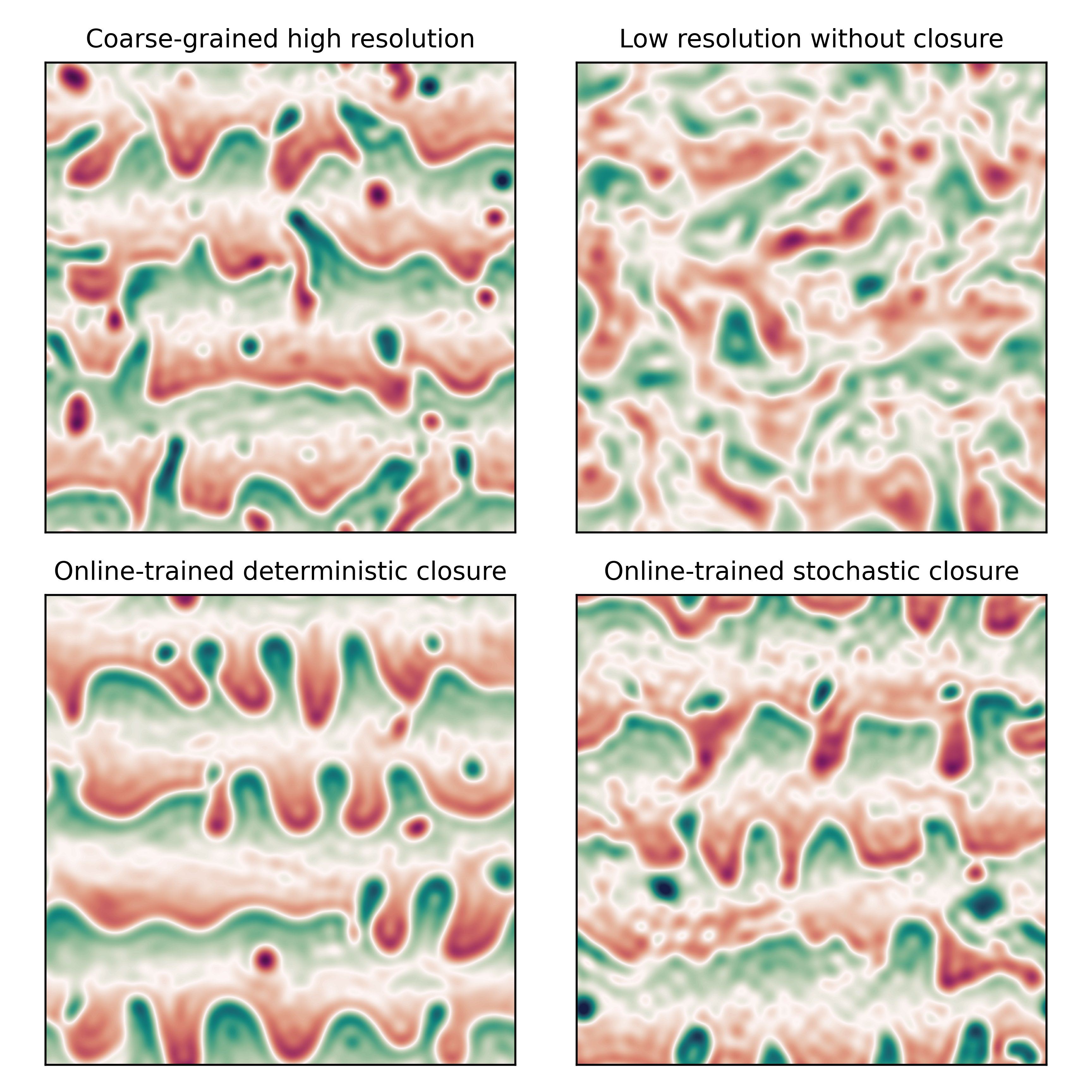}
\caption{Snapshots of upper-layer PV anomaly from free-running simulations after $ 100 $ years with the best online-trained closures, both stochastic and deterministic. Offline-trained closures are omitted because they became unstable before the end of the integrations.}
\label{fig:side}
\end{figure}

\section{Conclusions}

The development of numerical models of complex systems inevitably involves neglecting certain degrees of freedom. A standard approach to calibrating such models is to minimize one-step deterministic prediction errors, such as the mean squared error.
Although evaluating deterministic metrics over extended trajectories is becoming more common, with the aim of improving stability, our analysis demonstrates that doing so introduces a systematic long-lead bias. As established analytically and validated numerically, optimizing any deterministic distance metric over multi-step trajectories inherently forces the model to damp out natural variability, leading to a collapse of predictive variance and distorted long-term statistics.

To represent the dynamics of coarse-grained systems accurately, model calibration must account for the intrinsic uncertainty induced by unresolved components. We have shown that when models are formulated probabilistically and tuned over finite trajectories using strictly proper scoring rules, this structural degeneracy is avoided. Trajectory-based probabilistic scoring aligns the calibration objective with the true dynamics, allowing models to simultaneously capture short-term conditional evolution and the correct long-term invariant measure.

Importantly, these findings extend beyond machine learning parameterizations in fluid dynamics, applying generally to the calibration of any reduced-order model. Whenever a chaotic system is only partially resolved, standard deterministic calibration will predictably degrade long-term fidelity. Trajectory-based probabilistic learning is therefore not merely an optional add-on for uncertainty quantification, but an essential ingredient in building statistically faithful reduced models of coarse-grained systems.

\section*{Materials and Methods}
% Paste the content you currently have inside \matmethods{...} here.

High-resolution data were generated from a two-layer quasi-geostrophic turbulence model on a doubly periodic domain. In the learning problem, the state at time $ t_n $ was the stacked pair of layerwise potential-vorticity-anomaly fields: the full state $ \bm{x}_n $ was defined on the fine grid, and the resolved state $ \overline{\bm{x}}_n $ was obtained by coarse-graining to the low-resolution grid. The coarse-graining operator combined spectral truncation with a low-pass filter. After an initial spin-up to approximate stationarity, 100 years of simulation output were used for training and an independent 100-year dataset was used for validation and evaluation.

Closures were represented as conditional generative models for unresolved model error,
\begin{equation*}
\widetilde{\bm{m}}_n = \bm{G}_{\bm{\theta}}(\widetilde{\bm{x}}_n,\bm{\xi}_n),
\end{equation*}
where $ \bm{G}_{\bm{\theta}} $ was a convolutional neural network and $ \bm{\xi}_n $ was an independent standard Gaussian noise field. Deterministic closures were obtained by removing the noise input. Models were trained either offline with one-step losses or online over finite windows. Stochastic closures were calibrated with the energy score, estimated from model ensembles, and optimization used AdamW with a curriculum in window length. Predictive skill was evaluated using validation energy scores, and long-run fidelity was assessed using time-averaged kinetic-energy spectra from long free-running simulations. Full equations, parameter values, architecture details, and numerical methods are given in the SI Appendix.

\section*{Data availability}
The JAX implementation of the quasi-geostrophic model and all other code required to reproduce the results herein are available on Zenodo (\url{https://doi.org/10.5281/zenodo.19335604}). The trained neural network models are also available on Zenodo (\url{https://doi.org/10.5281/zenodo.19335706}).

\section*{Acknowledgments}
This work was supported by EPSRC grant number EP/X01259X/1. This work was also supported by the Edinburgh International Data Facility (EIDF) and the Data-Driven Innovation Programme at the University of Edinburgh. Access to EIDF was facilitated through the University of Edinburgh’s Generative AI Laboratory GAIL Fellow scheme.

\bibliographystyle{pnas-new}

% \bibliography{main}

{
\footnotesize % Shrinks the text (can also use \small or \scriptsize)
\let\oldbibitem\bibitem
\renewcommand{\bibitem}{%
  \vspace{-0.3em} % Negative vertical space to crush lines together
  \oldbibitem
}

}

%%%%%%%%%%%%%%%%%%%%%%%%%%%%%%%%%%%%%%%%%%%%%%%%%%
% -------------- Appendix ------------------------
%%%%%%%%%%%%%%%%%%%%%%%%%%%%%%%%%%%%%%%%%%%%%%%%%%

\clearpage
\newgeometry{top=1in, bottom=1in, left=1in, right=1in, onecolumn} 
\onecolumn % Call this just to be absolutely safe
\appendix

% Optionally add a big title for the Appendix
\begin{center}
    \LARGE \textbf{Supporting Information}
\end{center}
\vspace{2em}

%%%

\section*{Proofs of main-text results}

% \section*{Proof of Proposition 1: Degeneracy of long-lead mean-squared-error training}

\begin{proof}[Proof of Proposition 1]

Let $\bar{\mu} := \mu \circ \overline{(\cdot)}^{-1}$ be the pushforward invariant measure on the resolved space. By construction, the trajectories of the model are generated by the recurrence $\widetilde{\bm{x}}_{n+k+1} = \bm{F}_{\bm{\theta}}(\widetilde{\bm{x}}_{n+k},\, \bm{\xi}_{n+k})$, starting from the exact initial condition $\widetilde{\bm{x}}_n = \overline{\bm{x}}_n$. Therefore, the prediction at lead time $m$ can be written as a deterministic composition of the initial condition and the accumulated sequence of independent noise variables, which we denote as $\bm{\Xi}_{n,m} = (\bm{\xi}_n,\, \bm{\xi}_{n+1},\, \dots,\, \bm{\xi}_{n+m-1})$, yielding
\begin{equation*}
    \widetilde{\bm{x}}_{n+m} = \bm{F}_{\bm{\theta}}^{(m)}(\overline{\bm{x}}_n,\, \bm{\Xi}_{n,m}).
\end{equation*}

We expand the mean-squared-error objective using the law of total expectation, conditioning on the resolved state $\overline{\bm{x}}_n\sim\bar{\mu}$ to obtain
\begin{equation*}
    \mathcal{J}_m(\bm{\theta}) = \mathbb{E}_{\bar{\mu}} \bigg[ \mathbb{E} \Big[ \big\| \widetilde{\bm{x}}_{n+m} - \overline{\bm{x}}_{n+m} \big\|^2 \;\Big|\; \overline{\bm{x}}_n \Big] \bigg].
\end{equation*}
For any fixed initial condition $\overline{\bm{x}}_n$, we apply the multivariate bias-variance identity to the inner conditional expectation, which gives
\begin{align*}
    \mathbb{E} \Big[ \big\| \widetilde{\bm{x}}_{n+m} - \overline{\bm{x}}_{n+m} \big\|^2 \;\Big|\; \overline{\bm{x}}_n \Big] 
    &= \Big\| \mathbb{E}\big[\widetilde{\bm{x}}_{n+m} \mid \overline{\bm{x}}_n\big] - \mathbb{E}\big[\overline{\bm{x}}_{n+m} \mid \overline{\bm{x}}_n\big] \Big\|^2 \\
    &\quad + \operatorname{Tr}\Big(\operatorname{Cov}\big(\widetilde{\bm{x}}_{n+m} \mid \overline{\bm{x}}_n\big)\Big) 
    + \operatorname{Tr}\Big(\operatorname{Cov}\big(\overline{\bm{x}}_{n+m} \mid \overline{\bm{x}}_n\big)\Big) \\
    &\quad - 2\operatorname{Tr}\Big(\operatorname{Cov}\big(\widetilde{\bm{x}}_{n+m},\, \overline{\bm{x}}_{n+m} \mid \overline{\bm{x}}_n\big)\Big).
\end{align*}
Conditioned on $\overline{\bm{x}}_n$, the surrogate prediction $\widetilde{\bm{x}}_{n+m}$ is a measurable function of $(\overline{\bm{x}}_n,\bm{\Xi}_{n,m})$ and hence its only source of randomness is the injected noise sequence $\bm{\Xi}_{n,m}$. By construction, $\bm{\Xi}_{n,m}$ is generated independently of the true system and, in particular, satisfies the conditional independence $\bm{\Xi}_{n,m}\indep \bm{x}'_n \mid \overline{\bm{x}}_n$. Since the true future $\overline{\bm{x}}_{n+m}$ is a measurable function of $(\overline{\bm{x}}_n,\bm{x}'_n)$, it follows that $\widetilde{\bm{x}}_{n+m}\indep \overline{\bm{x}}_{n+m}\mid \overline{\bm{x}}_n$, and therefore
\begin{equation*}
    \operatorname{Cov}\!\big(\widetilde{\bm{x}}_{n+m},\,\overline{\bm{x}}_{n+m}\mid \overline{\bm{x}}_n\big)=\bm{0}.
\end{equation*}

Taking the outer expectation over the coarse-grained invariant measure $\bar{\mu}$, the total loss separates into three non-negative terms:
\begin{align*}
    \mathcal{J}_m(\bm{\theta}) &= \mathbb{E}_{\bar{\mu}} \bigg[ \Big\| \mathbb{E}\big[\widetilde{\bm{x}}_{n+m} \mid \overline{\bm{x}}_n\big] - \mathbb{E}\big[\overline{\bm{x}}_{n+m} \mid \overline{\bm{x}}_n\big] \Big\|^2 \bigg] \tag{Conditional Bias} \\
    &\quad + \mathbb{E}_{\bar{\mu}} \bigg[ \operatorname{Tr}\Big(\operatorname{Cov}\big(\widetilde{\bm{x}}_{n+m} \mid \overline{\bm{x}}_n\big)\Big) \bigg] \tag{Predictive Variance} \\
    &\quad + \mathbb{E}_{\bar{\mu}} \bigg[ \operatorname{Tr}\Big(\operatorname{Cov}\big(\overline{\bm{x}}_{n+m} \mid \overline{\bm{x}}_n\big)\Big) \bigg]. \tag{Target Uncertainty}
\end{align*}
The third term (Target Uncertainty) depends solely on the true dynamics and represents the irreducible error given the initial condition. Defining $C_m$ as this term, it is independent of the model parameters $\bm{\theta}$, which establishes the exact decomposition stated in the proposition. Because the Conditional Bias and Predictive Variance terms are expectations of non-negative quantities, they act as non-negative additive penalties in the objective.

By assumption, the coarse-grained state decorrelates under the dynamics such that
\begin{equation*}
    \mathbb{E}\big[\overline{\bm{x}}_{n+m} \mid \overline{\bm{x}}_n\big]
    \xrightarrow{L^2(\bar{\mu})}
    \mathbb{E}_{\bar{\mu}}[\overline{\bm{x}}]
    \qquad \text{as } m \to \infty.
\end{equation*}
Therefore, in the exact decomposition above, the target conditional mean appearing in the conditional-bias term approaches the climatological mean at long lead times, while the predictive-variance term remains an additive non-negative penalty. Combined with the exact decomposition, this proves the proposition.
\end{proof}

% \section*{Proof of Proposition 2: Asymptotic consistency of strictly proper scoring rules}

\subsection*{Continuity and integrability assumptions used in Proposition 2}

Throughout this section, let $\mathcal{P}_2$ denote the set of Borel probability measures on the resolved space with finite second moment. For each fixed parameter value $\bm{\theta}$ and lead time $m$, define
\begin{equation*}
    f_{m,\bm{\theta}}(\overline{\bm{x}}_n)
    :=
    \mathbb{E}_{\overline{\bm{y}} \sim Q_m(\cdot \mid \overline{\bm{x}}_n)}
    \Big[
        \mathcal{S}\big(
            \widetilde{\mathbb{P}}_{\bm{\theta},m}(\cdot \mid \overline{\bm{x}}_n),
            \overline{\bm{y}}
        \big)
    \Big].
\end{equation*}

The continuity and integrability assumptions referenced in main-text Proposition~2 are the following:
\begin{enumerate}
    \item The expected score functional
    \begin{equation*}
        (P,Q) \mapsto \mathbb{E}_{\bm{y}\sim Q}\big[\mathcal{S}(P,\bm{y})\big]
    \end{equation*}
    is jointly continuous with respect to $W_2 \times W_2$ on $\mathcal{P}_2 \times \mathcal{P}_2$.

    \item For each fixed $\bm{\theta}$, the sequence
    \begin{equation*}
        \{f_{m,\bm{\theta}}(\overline{\bm{x}}_n)\}_{m\ge 1}
    \end{equation*}
    is uniformly integrable under $\bar{\mu}$.
\end{enumerate}

\begin{proof}[Proof of Proposition 2]

Let $\bar{\mu} := \mu \circ \overline{(\cdot)}^{-1}$ be the pushforward invariant measure on the resolved space, and use the notation introduced above. By assumption, the true lead-time conditional distributions satisfy
\begin{equation*}
    Q_m(\cdot \mid \overline{\bm{x}}_n) \xrightarrow{W_2} \bar{\mu}(\cdot)
\end{equation*}
for $\bar{\mu}$-almost every $\overline{\bm{x}}_n$, and the model predictive distributions admit a $W_2$ limit
\begin{equation*}
    \widetilde{\mathbb{P}}_{\bm{\theta},m}(\cdot \mid \overline{\bm{x}}_n)
    \xrightarrow{W_2}
    \widetilde{\mathbb{P}}_{\bm{\theta},\infty}(\cdot \mid \overline{\bm{x}}_n)
\end{equation*}
for $\bar{\mu}$-almost every $\overline{\bm{x}}_n$.

By joint continuity of the expected score functional with respect to $W_2 \times W_2$,
\begin{equation*}
    f_{m,\bm{\theta}}(\overline{\bm{x}}_n)
    \to
    f_{\infty,\bm{\theta}}(\overline{\bm{x}}_n)
    :=
    \mathbb{E}_{\overline{\bm{y}} \sim \bar{\mu}}
    \bigg[
        \mathcal{S}\Big(
            \widetilde{\mathbb{P}}_{\bm{\theta},\infty}(\cdot \mid \overline{\bm{x}}_n),
            \overline{\bm{y}}
        \Big)
    \bigg]
\end{equation*}
for $\bar{\mu}$-almost every $\overline{\bm{x}}_n$.

Because the sequence $\{f_{m,\bm{\theta}}(\overline{\bm{x}}_n)\}_m$ is uniformly integrable under $\bar{\mu}$ for each fixed $\bm{\theta}$, we may pass the limit through the outer expectation, yielding
\begin{equation*}
    \lim_{m\to\infty}\mathcal{L}_m(\bm{\theta})
    =
    \mathbb{E}_{\overline{\bm{x}}_n}\big[f_{\infty,\bm{\theta}}(\overline{\bm{x}}_n)\big].
\end{equation*}

Every strictly proper scoring rule $\mathcal{S}$ induces a non-negative divergence
\begin{equation*}
    d_{\mathcal S}(P,Q)
    :=
    \mathbb{E}_{\overline{\bm{y}} \sim Q}\big[\mathcal{S}(P,\overline{\bm{y}})\big]
    -
    \mathbb{E}_{\overline{\bm{y}} \sim Q}\big[\mathcal{S}(Q,\overline{\bm{y}})\big]
    \ge 0,
\end{equation*}
with equality if and only if $P=Q$. Rearranging gives
\begin{equation*}
    \mathbb{E}_{\overline{\bm{y}} \sim Q}\big[\mathcal{S}(P,\overline{\bm{y}})\big]
    =
    d_{\mathcal S}(P,Q)
    +
    \mathbb{E}_{\overline{\bm{y}} \sim Q}\big[\mathcal{S}(Q,\overline{\bm{y}})\big].
\end{equation*}

Applying this identity with
\begin{equation*}
    P=\widetilde{\mathbb{P}}_{\bm{\theta},\infty}(\cdot \mid \overline{\bm{x}}_n),
    \qquad
    Q=\bar{\mu},
\end{equation*}
we obtain
\begin{equation*}
    \lim_{m\to\infty}\mathcal{L}_m(\bm{\theta})
    =
    \mathbb{E}_{\overline{\bm{x}}_n}
    \bigg[
        d_{\mathcal S}\big(
            \widetilde{\mathbb{P}}_{\bm{\theta},\infty}(\cdot \mid \overline{\bm{x}}_n),
            \bar{\mu}
        \big)
    \bigg]
    +
    \mathbb{E}_{\overline{\bm{y}}\sim\bar{\mu}}
    \big[
        \mathcal{S}(\bar{\mu},\overline{\bm{y}})
    \big].
\end{equation*}

The second term depends only on the physical system and the choice of scoring rule, so it is independent of the model parameters $\bm{\theta}$. Thus the parameter-dependent part of the asymptotic objective is exactly the expected divergence between the model's long-lead predictive distribution and the invariant measure.
\end{proof}

\section*{Supplementary theoretical result}

% \section*{Generalization of Proposition 1} % Polish metric space version

The degeneracy identified in main-text Proposition~1 is not specific to mean-squared error. It is a structural feature of any objective that evaluates forecasts only through pointwise discrepancies against realized targets, without rewarding predictive variability.

\subsection*{Formal setting}
Let the resolved state space $(\overline{\mathcal{X}}, d)$ be a Polish metric space, fix a reference point $x_0 \in \overline{\mathcal{X}}$, and let $\mathcal{P}_p(\overline{\mathcal{X}})$ denote the space of Borel probability measures $\nu$ on $\overline{\mathcal{X}}$ such that
\begin{equation*}
\int_{\overline{\mathcal{X}}} d(x,x_0)^p\,\mathrm{d}\nu(x)<\infty.
\end{equation*}
Let
$\bar{\mu} := \mu \circ \overline{(\cdot)}^{-1}\in\mathcal{P}_p(\overline{\mathcal{X}})$ be the pushforward invariant measure on the resolved space, and assume $\overline{\bm{x}}_n\sim\bar{\mu}$.

% We adopt the Polish metric space setting because the result is not specific to Euclidean space, but applies equally when the resolved space takes another form, such as a discrete finite element space, or a metric graph. By doing so we also avoid the bookkeeping associated with working in a specific coordinate system.

We adopt the Polish metric space setting to make clear that the result is not a peculiarity of Euclidean state vectors or the standard norm. The argument depends only on the metric geometry of the resolved space, and therefore applies more generally than a coordinate-based formulation. It also avoids the bookkeeping that comes with fixing a particular coordinate system.

\begin{definition}[Pointwise losses]\label{def:pointwise-losses}
Let
\begin{equation*}
\ell : \overline{\mathcal{X}} \times \overline{\mathcal{X}} \to \mathbb{R}
\end{equation*}
be a continuous bivariate loss function such that for all $\nu \in \mathcal{P}_p(\overline{\mathcal{X}})$ and all $\overline{\bm{x}}\in\overline{\mathcal{X}}$,
\begin{equation*}
\int_{\overline{\mathcal{X}}} \big|\ell(\widetilde{\bm{x}},\overline{\bm{x}})\big|\,\mathrm{d}\nu(\widetilde{\bm{x}})<\infty.
\end{equation*}
The associated \textit{pointwise loss} is
\begin{equation*}
\Lambda(\nu, \overline{\bm{x}})
:=
\mathbb{E}_{\widetilde{\bm{x}} \sim \nu}
\big[
\ell(\widetilde{\bm{x}}, \overline{\bm{x}})
\big].
\end{equation*}
For a deterministic forecast $\widetilde{\bm{x}}_d$, this reduces to $\Lambda(\delta_{\widetilde{\bm{x}}_d}, \overline{\bm{x}}) = \ell(\widetilde{\bm{x}}_d, \overline{\bm{x}})$, where $\delta_{\widetilde{\bm{x}}_d}$ denotes the Dirac measure at $\widetilde{\bm{x}}_d$.
\end{definition}

\begin{suppproposition}[Degeneracy of pointwise losses]
\label{prop:si_general_collapse}
Assume that for $\bar{\mu}$-almost every $\overline{\bm{x}}_n$, the true conditional measure $Q_m(\cdot \mid \overline{\bm{x}}_n) := \mathbb{P}(\overline{\bm{x}}_{n+m} \in \cdot \mid \overline{\bm{x}}_n)$ converges in $W_p$ to the invariant measure:
\begin{equation*}
Q_m(\cdot \mid \overline{\bm{x}}_n) \xrightarrow{W_p} \bar{\mu}(\cdot) \qquad \text{as } m \to \infty.
\end{equation*}

Consider a Markovian model $\widetilde{\bm{x}}_{n+k+1} = \bm{F}_{\bm{\theta}}(\widetilde{\bm{x}}_{n+k}, \bm{\xi}_{n+k})$, with $\widetilde{\bm{x}}_n = \overline{\bm{x}}_n$, and let $\widetilde{\mathbb{P}}_{\bm{\theta},m}(\cdot \mid \overline{\bm{x}}_n) \in \mathcal{P}_p(\overline{\mathcal{X}})$ denote its $m$-step conditional predictive measure. Assume that for $\bar{\mu}$-almost every $\overline{\bm{x}}_n$,
\begin{equation*}
\widetilde{\mathbb{P}}_{\bm{\theta},m}(\cdot \mid \overline{\bm{x}}_n) \xrightarrow{W_p} \widetilde{\mathbb{P}}_{\bm{\theta},\infty}(\cdot \mid \overline{\bm{x}}_n).
\end{equation*}

Let $\ell$ be a continuous bivariate loss satisfying the growth bound
\begin{equation*}
|\ell(\widetilde{\bm{y}},\overline{\bm{y}})|
\le
C\Big(1+d(\widetilde{\bm{y}},x_0)^p+d(\overline{\bm{y}},x_0)^p\Big)
\end{equation*}
for some constant $C>0$ and all $\widetilde{\bm{y}},\overline{\bm{y}}\in\overline{\mathcal{X}}$, and assume that the induced expected loss functional
\begin{equation*}
(P, Q) \mapsto \int_{\overline{\mathcal{X}}} \int_{\overline{\mathcal{X}}} \ell(\widetilde{\bm{y}}, \overline{\bm{y}}) \,\mathrm{d}P(\widetilde{\bm{y}}) \,\mathrm{d}Q(\overline{\bm{y}})
\end{equation*}
is jointly continuous with respect to $W_p \times W_p$. Define the climatological risk $R(\bm{c}) := \mathbb{E}_{\overline{\bm{y}} \sim \bar{\mu}} [\ell(\bm{c}, \overline{\bm{y}})]$, and assume that $R$ has a unique global minimizer $\bm{c}^* \in \overline{\mathcal{X}}$.

For each lead time $m$, define the conditional expected loss:
\begin{equation*}
g_{m,\bm{\theta}}(\overline{\bm{x}}_n) := \int_{\overline{\mathcal{X}}} \int_{\overline{\mathcal{X}}} \ell(\widetilde{\bm{y}}, \overline{\bm{y}}) \,\mathrm{d}\widetilde{\mathbb{P}}_{\bm{\theta}, m}(\widetilde{\bm{y}} \mid \overline{\bm{x}}_n) \,\mathrm{d}Q_m(\overline{\bm{y}} \mid \overline{\bm{x}}_n).
\end{equation*}
The corresponding expected loss is $\mathcal{J}_m^{\ell}(\bm{\theta}) = \mathbb{E}_{\overline{\bm{x}}_n} [g_{m,\bm{\theta}}(\overline{\bm{x}}_n)]$. Assume additionally that, for each fixed $\bm{\theta}$, the sequence $\{ g_{m,\bm{\theta}}(\overline{\bm{x}}_n) \}_{m \ge 1}$ is uniformly integrable under $\bar{\mu}$. Then
\begin{equation*}
\lim_{m \to \infty}\mathcal{J}_m^{\ell}(\bm{\theta}) = \mathbb{E}_{\overline{\bm{x}}_n} \left[ \int_{\overline{\mathcal{X}}} R(\widetilde{\bm{y}}) \, \mathrm{d}\widetilde{\mathbb{P}}_{\bm{\theta},\infty} (\widetilde{\bm{y}} \mid \overline{\bm{x}}_n) \right].
\end{equation*}
In particular,
\begin{equation*}
\lim_{m \to \infty}\mathcal{J}_m^{\ell}(\bm{\theta}) \ge R(\bm{c}^*),
\end{equation*}
with equality if and only if
\begin{equation*}
\widetilde{\mathbb{P}}_{\bm{\theta},\infty}(\cdot \mid \overline{\bm{x}}_n) = \delta_{\bm{c}^*}(\cdot)
\qquad \text{for } \bar{\mu}\text{-almost every } \overline{\bm{x}}_n.
\end{equation*}
Thus, optimizing any pointwise loss over long lead times forces collapse onto a deterministic climatological risk minimizer.

\end{suppproposition}

% \section*{Proof of the Generalization of Proposition 1}
\begin{proof}[Proof of Supplementary Proposition 1]
Conditioning on the initial state $\overline{\bm{x}}_n$, the conditional expected loss is given by
\begin{equation*}
g_{m,\bm{\theta}}(\overline{\bm{x}}_n) := \int_{\overline{\mathcal{X}}} \int_{\overline{\mathcal{X}}} \ell(\widetilde{\bm{y}}, \overline{\bm{y}}) \,\mathrm{d}\widetilde{\mathbb{P}}_{\bm{\theta},m} (\widetilde{\bm{y}} \mid \overline{\bm{x}}_n) \,\mathrm{d}Q_m(\overline{\bm{y}} \mid \overline{\bm{x}}_n).
\end{equation*}
The overall objective is given by $\mathcal{J}_m^{\ell}(\bm{\theta}) = \mathbb{E}_{\overline{\bm{x}}_n} [g_{m,\bm{\theta}}(\overline{\bm{x}}_n)]$. 

By assumption, for $\bar{\mu}$-almost every $\overline{\bm{x}}_n$, $Q_m \xrightarrow{W_p} \bar{\mu}$ and $\widetilde{\mathbb{P}}_{\bm{\theta},m} \xrightarrow{W_p} \widetilde{\mathbb{P}}_{\bm{\theta},\infty}$. Joint continuity of the expected loss functional yields $g_{m,\bm{\theta}}(\overline{\bm{x}}_n) \to g_{\infty,\bm{\theta}}(\overline{\bm{x}}_n)$, where
\begin{equation*}
g_{\infty,\bm{\theta}}(\overline{\bm{x}}_n) := \int_{\overline{\mathcal{X}}} \int_{\overline{\mathcal{X}}} \ell(\widetilde{\bm{y}}, \overline{\bm{y}}) \,\mathrm{d}\widetilde{\mathbb{P}}_{\bm{\theta},\infty} (\widetilde{\bm{y}} \mid \overline{\bm{x}}_n) \,\mathrm{d}\bar{\mu}(\overline{\bm{y}}).
\end{equation*}

Since $\{ g_{m,\bm{\theta}}(\overline{\bm{x}}_n) \}_{m \ge 1}$ is uniformly integrable under $\bar{\mu}$, we may pass the limit through the outer expectation: $\lim_{m \to \infty}\mathcal{J}_m^{\ell}(\bm{\theta}) = \mathbb{E}_{\overline{\bm{x}}_n} [g_{\infty,\bm{\theta}}(\overline{\bm{x}}_n)]$. Now define the climatological risk $R(\widetilde{\bm{y}}) := \int_{\overline{\mathcal{X}}} \ell(\widetilde{\bm{y}}, \overline{\bm{y}}) \, \mathrm{d}\bar{\mu}(\overline{\bm{y}})$. By Fubini's theorem and the growth condition on $\ell$,
\begin{equation*}
g_{\infty,\bm{\theta}}(\overline{\bm{x}}_n) = \int_{\overline{\mathcal{X}}} R(\widetilde{\bm{y}}) \, \mathrm{d}\widetilde{\mathbb{P}}_{\bm{\theta},\infty} (\widetilde{\bm{y}} \mid \overline{\bm{x}}_n),
\end{equation*}
and hence
\begin{equation*}
\lim_{m \to \infty}\mathcal{J}_m^{\ell}(\bm{\theta}) = \mathbb{E}_{\overline{\bm{x}}_n} \left[ \int_{\overline{\mathcal{X}}} R(\widetilde{\bm{y}}) \, \mathrm{d}\widetilde{\mathbb{P}}_{\bm{\theta},\infty} (\widetilde{\bm{y}} \mid \overline{\bm{x}}_n) \right].
\end{equation*}

Because $R$ has a unique global minimizer $\bm{c}^*$, $R(\widetilde{\bm{y}}) \ge R(\bm{c}^*)$ for all $\widetilde{\bm{y}} \in \overline{\mathcal{X}}$, with equality if and only if $\widetilde{\bm{y}} = \bm{c}^*$. Therefore,
\begin{equation*}
\lim_{m \to \infty}\mathcal{J}_m^{\ell}(\bm{\theta}) \ge \mathbb{E}_{\overline{\bm{x}}_n} \left[ \int_{\overline{\mathcal{X}}} R(\bm{c}^*) \, \mathrm{d}\widetilde{\mathbb{P}}_{\bm{\theta},\infty} (\widetilde{\bm{y}} \mid \overline{\bm{x}}_n) \right] = R(\bm{c}^*).
\end{equation*}
Thus $R(\bm{c}^*)$ is the minimum possible value of the asymptotic objective. Equality holds if and only if
\begin{equation*}
\mathbb{E}_{\overline{\bm{x}}_n} \left[ \int_{\overline{\mathcal{X}}} \big( R(\widetilde{\bm{y}}) - R(\bm{c}^*) \big) \, \mathrm{d}\widetilde{\mathbb{P}}_{\bm{\theta},\infty} (\widetilde{\bm{y}} \mid \overline{\bm{x}}_n) \right] = 0.
\end{equation*}
The integrand is nonnegative, so this occurs if and only if the inner integral is zero for $\bar{\mu}$-almost every $\overline{\bm{x}}_n$. Since $R(\widetilde{\bm{y}}) - R(\bm{c}^*) > 0$ for all $\widetilde{\bm{y}} \neq \bm{c}^*$, this is equivalent to $\widetilde{\mathbb{P}}_{\bm{\theta},\infty} (\cdot \mid \overline{\bm{x}}_n) = \delta_{\bm{c}^*}(\cdot)$ for $\bar{\mu}$-almost every $\overline{\bm{x}}_n$. % \hfill $\blacksquare$
\end{proof}

\noindent \textbf{Remark.} Supplementary Proposition 1 applies to a broad class of losses. In particular, it covers deterministic hybrid objectives that augment physical-space mismatch with penalties on transformed features of the state. For the turbulence setting considered in this paper, a natural example is a loss that combines a physical-space error with a discrepancy between Fourier coefficient amplitudes of the forecast and target fields. Provided the resulting bivariate loss satisfies the continuity and growth assumptions above, it remains a pointwise loss in the sense of Definition~\ref{def:pointwise-losses}: it compares a single forecast state with a single realized target state through a deterministic discrepancy. The proposition therefore still applies. Such modifications may change the induced climatological risk field, and may favor deterministic states with more realistic spectral structure, but they do not remove the underlying long-lead degeneracy: the asymptotic optimum is still attained only through collapse onto a deterministic risk minimizer, rather than through a nondegenerate predictive law.

\section*{Quasi-Geostrophic Turbulence Model Details}

We consider a canonical idealized model of geophysical turbulence known as the Phillips model \cite{phillips1954, vallis2017}.
In this two-layer model the quasi-geostrophic potential vorticity (PV) $Q$ is defined for each layer
\begin{subequations}\label{qgpv}
\begin{align}
    Q_1 &= \bm{\nabla}^2\Psi_1 + \beta y + \frac{k_d^2}{1 + \frac{H_1}{H_2}}(\Psi_2 - \Psi_1),\\
    Q_2 &= \bm{\nabla}^2\Psi_2 + \beta y + \frac{H_1}{H_2}\frac{k_d^2}{1 + \frac{H_1}{H_2}}(\Psi_1 - \Psi_2).
\end{align}
\end{subequations}
The parameter $\beta$ arises from the linear approximation of the Coriolis parameter $f=f_0+\beta y$, $H_1$ and $H_2$ are the heights of the upper and lower layers, respectively, and
$k_d^2 = \frac{f_0^2}{g'}\frac{H_1+H_2}{H_1H_2}$, where $g'=g(\rho_2 - \rho_1)/\rho_1$ is the reduced gravity. Up to the effects of friction, $Q$ is materially conserved, so that in the presence of linear bottom friction and small-scale dissipation (denoted ssd), we have
\begin{subequations}\label{qgpv_eq}
\begin{align}
    \frac{\partial Q_1}{\partial t} + J(\Psi_1,\, Q_1) &= \mathrm{ssd},\\
    \frac{\partial Q_2}{\partial t} + J(\Psi_2,\, Q_2) &= \mathrm{ssd} -\gamma \bm{\nabla}^2\Psi_2.
\end{align}
\end{subequations}

We force the system by imposing a base state with vertical shear that is zonal and horizontally uniform. This choice can be motivated physically by either a meridional temperature gradient or zonal wind forcing. In particular, let the imposed mean velocity be $\overline{\bm{U}}_1=(\overline{U}_1,\,0)$ and $\overline{\bm{U}}_2=(0,\,0)$. Equivalently, let
\begin{subequations}
\label{sf_anomaly}
    \begin{align}
        \Psi_1(x,\, y,\, t) &= -\overline{U}_1 y + \psi_1(x,\,y,\,t),\\
        \Psi_2(x,\, y,\, t) &= \psi_2(x,\,y,\,t),
    \end{align}
\end{subequations}
define the stream function anomaly $\psi$. Upon substituting~\eqref{sf_anomaly} into~\eqref{qgpv_eq} and defining the eddy PV
\begin{subequations}\label{qgpv_anomaly}
    \begin{align}
        q_1 &= \bm{\nabla}^2\psi_1 +  \frac{k_d^2}{1 + \frac{H_1}{H_2}}(\psi_2 - \psi_1),\\
        q_2 &= \bm{\nabla}^2\psi_2 + \frac{H_1}{H_2}\frac{k_d^2}{1 + \frac{H_1}{H_2}}(\psi_1 - \psi_2),
    \end{align}
\end{subequations}
we find
\begin{subequations}\label{eq:phillips}
    \begin{align}
        \frac{\partial q_1}{\partial t} &+ J(\psi_1,\,q_1) + \beta_1\frac{\partial \psi_1}{\partial x} + \overline{U}_1\frac{\partial q_1}{\partial x} = \mathrm{ssd},\\
        \frac{\partial q_2}{\partial t} &+ J(\psi_2,\,q_2) + \beta_2\frac{\partial \psi_2}{\partial x} = \mathrm{ssd} - \gamma\bm{\nabla}^2\psi_2,
    \end{align}
\end{subequations}
with $(\beta_1,\, \beta_2) = \left(\beta+\frac{k_d^2}{1 + \frac{H_1}{H_2}}\overline{U}_1,\, \beta - \frac{H_1}{H_2}\frac{k_d^2}{1 + \frac{H_1}{H_2}}\overline{U}_1\right)$.
Note from~\eqref{qgpv},~\eqref{sf_anomaly} and~\eqref{qgpv_anomaly}, that the (total) PV in each layer is
\begin{align*}
    Q_i = q_i + \beta_i y.
\end{align*}

We solve~\eqref{eq:phillips} on a rectangular domain with doubly periodic PV anomaly. The model is implemented in JAX using a Fourier pseudospectral method and third-order Adams--Bashforth time stepping. The small-scale dissipation operator, ssd, is defined in the Fourier domain as a highly scale-selective low-pass filter, as first implemented by~\cite{lacasce1996}. Dealiasing is performed approximately, using the Fourier smoothing technique of \cite{hou2007computing}, which retains significantly more small-scale energy than approximate dealiasing with the popular $2/3$ Fourier truncation technique, but benefits from the same computational speed-up with respect to exact $3/2$ dealiasing. In practice, the influence of the Fourier smoothing scheme is negligible, since the small-scale dissipation operator is already highly effective in controlling aliasing error.

Depending on the configuration of the model parameters, the resulting dynamics are characterized by the presence of coherent vortices and/or zonal jets~\cite[see e.g.][]{berloff_Kamenkovich_2019}. The model can also be configured to resemble more the dynamics of the ocean or the atmosphere. Here we adopt a configuration resembling midlatitude ocean dynamics, in which zonal jets are prominent. Physical parameter values are shown in Table~\ref{tab:qg} --- these are the same as the ``jet configuration'' used by~\cite{ross2023}.

\begin{table}
    \centering
    \begin{tabular}{c|c|c}
        Parameter & Symbol & Value\\
        \hline
        Horizontal domain size & $L_x$, $L_y$ & $1000\,\mathrm{km}$, $1000\,\mathrm{km}$\\
        Heights of layers $1$ and $2$ & $H_1$, $H_2$ & $500\,\mathrm{m}$, $5000\,\mathrm{m}$\\
        Mean upper layer zonal velocity & $\overline{U}_1$ &  $0.025\,\mathrm{m\,s}^{-1}$\\
        Meridional gradient of the Coriolis parameter & $\beta$ & $10^{-11}\,\mathrm{s}^{-1}\,\mathrm{m}^{-1}$\\
        Rate of bottom layer friction & $\gamma$ & $7\times10^{-8}\,\mathrm{s}^{-1}$\\
        First Rossby radius of deformation & $r_d = k_d^{-1}$ & $15\,\mathrm{km}$\\
        Coarse (fine) horizontal resolution & $n_x=n_y$ & $64$ ($1024$)\\
        Coarse (fine) time step & $\delta t$ & $2$ $\mathrm{hours}$ ($15$ $\mathrm{minutes}$)
    \end{tabular}
    \caption{Physical parameters of the quasi-geostrophic turbulence model.}
    \label{tab:qg}
\end{table}

\section*{Neural Network Architecture and Training Details}

\subsection*{State representation and closure architecture}
In the quasi-geostrophic example, the resolved state at time step $ n $ is the stacked pair of coarse-grid potential-vorticity-anomaly fields in the two layers,
$$
\overline{\bm{x}}_n
=
\big(
\overline{q}_1(\cdot,t_n),\,
\overline{q}_2(\cdot,t_n)
\big),
$$
represented in physical space on the coarse grid. The closure predicts the unresolved model-error field
$$
\widetilde{\bm{m}}_n = \bm{G}_{\bm{\theta}}(\widetilde{\bm{x}}_n,\bm{\xi}_n),
$$
where $ \bm{\xi}_n $ is an independent standard Gaussian random field with the same spatial dimensions as the resolved state. Deterministic closures are obtained by removing the dependence on $ \bm{\xi}_n $.

The map $ \bm{G}_{\bm{\theta}} $ is implemented as a fully convolutional neural network with six convolutional layers, periodic padding, and width-5 filters. Each hidden layer is followed by batch normalization and a ReLU activation. In addition, each hidden layer is mapped to a two-channel output by a width-1 convolution, and these residual outputs are summed to form the final prediction. This architecture allows corrections from multiple depths of the network to contribute directly to the predicted model error. To enforce basic physical structure, the predicted model-error field is constrained to have zero spatial mean by setting the zero Fourier mode to zero. A high-pass spectral filter $ \mathcal{G}(k_x,k_y) \propto k_x^2 + k_y^2 $ is then applied to suppress spurious low-wavenumber energy. The resulting network contains 622,488 trainable parameters.

\subsection*{Data generation and coarse-graining}
Training and validation data were generated from long integrations of the high-resolution reference model. Starting from a random initial condition, the model was first integrated for 100 years to reach approximate statistical stationarity. A further 100 years of simulation were then used to generate training data by saving the two-layer PV-anomaly field every 2 hours, matching the coarse-model time step and yielding 438,000 consecutive snapshots. An independent 100-year integration was used to generate an equally large validation dataset.

At each saved time, the fine-grid state is the stacked pair of high-resolution PV-anomaly fields. The resolved state is obtained by applying a coarse-graining operator that combines spectral truncation with the same low-pass filter used for small-scale dissipation in the coarse model, following \cite{ross2023}. Thus, both the training data and the learned closure are defined on coarse-grid PV-anomaly fields in physical space, even though coarse-graining and numerical time stepping employ spectral operations. Figure~\ref{fig:coarsegraining} illustrates the effect of spectral truncation and subsequent filtering on representative PV-anomaly snapshots. The dominant large-scale flow features, including coherent zonal jets and mesoscale eddies, are retained by this procedure, although they are not fully resolved on the coarse grid.

\begin{figure}
    \centering
    \includegraphics[height=7in]{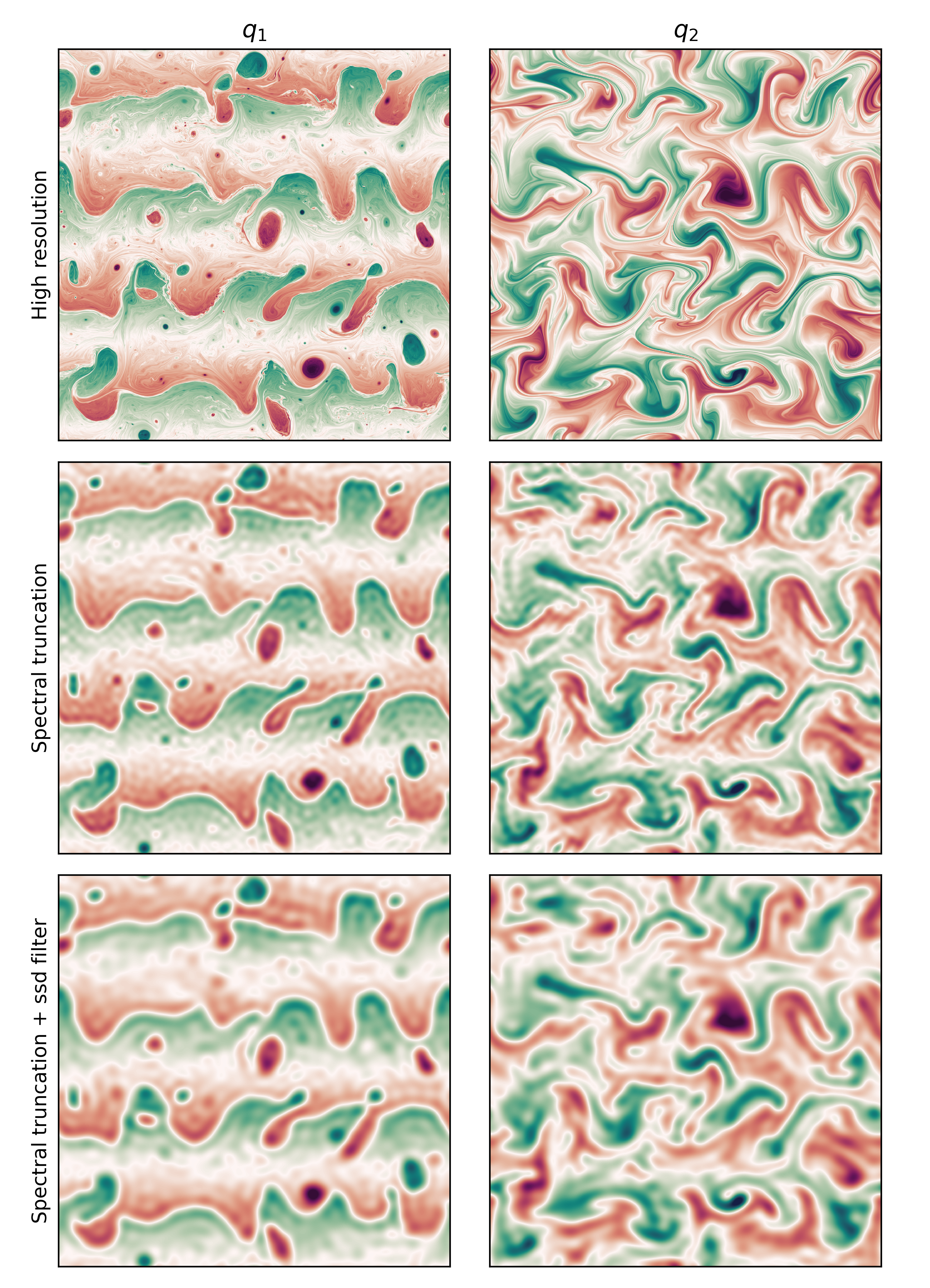}
    \caption{Coarse-graining procedure: snapshots of PV anomaly in the upper layer (left) and lower layer (right) at high resolution (top row), after spectral truncation (middle row), and after spectral truncation followed by application of the low-pass filter used for small-scale dissipation in~\eqref{eq:phillips} (bottom row).}
    \label{fig:coarsegraining}
\end{figure}

\subsection*{Training objective, optimization, and curriculum learning}
Closures were calibrated online using the energy score, which is a strictly proper scoring rule for ensemble forecasts. For a training window of length $ w $, the model is initialized from the true resolved state and integrated forward for $ w $ coarse-model time steps. Parameters $ \bm{\theta} $ are optimized to minimize the empirical mean energy score averaged over lead times within the window. For stochastic closures, this objective is estimated from ensembles of model realizations; for deterministic closures, the corresponding deterministic reduction of the energy score is used.

Optimization employed AdamW together with a cosine-decay learning-rate schedule with warm-up. Training used mini-batches of $ B = 4 $ trajectory windows and an ensemble size of $ S = 4 $ to obtain unbiased gradient estimates for the stochastic objective. To stabilize optimization over long horizons, training followed a 29-phase curriculum in which the window length was progressively increased from one coarse-model time step (2 hours) to 1200 time steps (100 days), with one epoch of training per phase. The independent validation dataset was used only for evaluation and not for parameter updates.

{
\footnotesize 
\let\oldbibitem\bibitem
\renewcommand{\bibitem}{%
  \vspace{-0.3em} 
  \oldbibitem
}
\bibliographystyleS{pnas-sample}

}

\end{document}